  \providecommand*{\toclevel@author}{999}
  \providecommand*{\toclevel@title}{0}
\theoremstyle{plain}
  \newtheorem{theorem}{Theorem}
\theoremstyle{definition}
\theoremstyle{remark}
\DeclareMathOperator{\err}{err}
\newcommand{\NOR}{{\mathrm{NOR}}}
\newcommand{\ABS}{{\mathrm{ABS}}}
\newcommand{\INT}{{\mathrm{INT}}}
\newcommand{\EMB}{{\mathrm{EMB}}}
\newcommand{\CRI}{{\mathrm{CRI}}}
\newcommand{\kor}{{\mathrm{kor}}}
\newcommand{\tlambda}{\widetilde{\lambda}}
\begin{document}

\title{Selected aspects of tractability analysis}

\author{Peter Kritzer}

\date{\today}

\maketitle

\begin{abstract}
    \noindent We give an overview of certain aspects of 
tractability analysis of multivariate problems. 
This paper is not intended to give a complete account of
the subject, but provides an insight into how the theory works for particular types of problems. 
We mainly focus on linear problems on Hilbert spaces, and mostly allow arbitrary linear information. 
In such cases, tractability analysis is closely linked to an analysis of the singular values of 
the operator under consideration.
We also highlight the more recent developments regarding exponential and generalized tractability. 
The theoretical results are illustrated by 
several examples throughout the article.
\end{abstract}

\noindent\textbf{Keywords:} Tractability, Complexity, Approximation of operators, Linear multivariate problems, Korobov spaces. 

\tableofcontents

\section{Introduction}\label{sec:intro}

A typical situation studied in the field of \textit{Information-Based Complexity (IBC)} is 
the problem of approximating a mapping, frequently called the \textit{solution operator}, 
$S_d: \calF_d \to \calG_d$, where $(\calF_d,\norm{\cdot}_{\calF_d})$ and $(\calG_d,\norm{\cdot}_{\calG_d})$ are normed spaces. Here, 
the parameter $d$ is a positive integer; in typical examples like function approximation 
or numerical integration, where $\calF_d$ is a function space, $d$ corresponds to the number of 
variables the elements of $\calF_d$ depend on. This motivates why $d$ is usually called the \textit{dimension} 
of the problem. We stress, however, that we do not require $\calF_d$ to be a function space in this paper. 
In order to approximate $S_d (f)$ for an unknown $f\in\calF_d$, we use an \textit{algorithm} $A_{n,d}$, based on 
$n$ \textit{information} measurements, $L_1 (f), L_2 (f),\ldots,L_n (f)$, where the mappings 
$L_i$ are in a given class $\Lambda$ of information. The choice of the mappings $L_1 , L_2, \ldots,L_n$, as well as
that of $n$ are allowed to be adaptive,
i.e., $L_i(\cdot)= L_i (\cdot\,; L_1(f), L_2(f ), \ldots, L_{i-1} (f))$, and the number of measurements $n$ can be a function of the $L_i(f)$.
For a general introduction to the field of IBC, we refer to \cite{TWW88}. 

Measuring the error of an algorithm $A_{n,d}$ by a suitable error measure,
\[
 \err (A_{n,d})=\err (A_{n,d},S_d,\calF_d,\calG_d,\Lambda),
\]
it is natural to ask two questions:
\begin{itemize}
 \item For given dimension $d$, and a given error threshold $\varepsilon>0$, what is the amount 
 of information necessary to achieve an error of at most $\varepsilon$?
 \item How does the amount of information change if one varies $\varepsilon$ and/or the dimension $d$?
\end{itemize}
These questions are at the core of \textit{tractability analysis}, which can be considered a 
sub-field of IBC, and was started with the two papers \cite{W94a, W94b}. Since then, numerous 
papers have studied various numerical problems and their tractability, which led 
to the comprehensive three-volume book \cite{NW08}--\cite{NW12} on tractability of multivariate problems. 

It is the goal of this paper to highlight some aspects of classical tractability analysis, and partly also recent developments 
that have been published after the third book \cite{NW12} of the aforementioned trilogy. 
In order to outline the basic concepts, it is first necessary to make the problem setting more precise. Since 
we are interested in the amount of information required to achieve a certain error tolerance and the dependence 
of this amount on $d$, it is necessary to consider not only the problem of approximating $S_d:\calF_d\to\calG_d$
for some fixed $d$, but problems which are in fact 
a whole sequence of problems,
\[
 \{S_d\colon \calF_d\to\calG_d\}_{d\in\NN},
\]
where the $\calF_d$ and $\calG_d$ are normed spaces, and the $S_d$ are solution operators. Here and in the following, 
we denote by $\NN$ the set of positive integers. Again, we consider algorithms $A_{n,d}$, using $n$ information measurements, 
to approximate $S_d$. However, we are actually interested in the best one can do when using $n$ pieces of information, which 
results in the so-called \textit{$n$-th minimal error},
\[
 e_n (S_d):=\inf_{A_{n,d}} \err (A_{n,d}),
\]
where again $\err (\cdot)$ is a suitable error measure, and the infimum is extended over all admissible algorithms $A_{n,d}$. In 
later sections of this paper, we will be more specific about the choice of $\err (\cdot)$ as well as about which algorithms we consider.

We also define the \textit{initial error}, which usually is defined as 
the error of the best constant algorithm $A_{0,d,c}\equiv c$, where $c$ is 
a fixed element in $\calG_d$. In many settings commonly considered in IBC, 
it turns out that the initial error is attained by the zero algorithm, i.e. 
$A_{0,d,0}$ (see, e.g., \cite[Chapter 4]{NW08}). We shall denote the initital error by $e_0 (S_d)$ in the following.

The \textit{information complexity} $n(\varepsilon,S_d)$ is   
the minimal number $n$ of continuous linear functionals    
needed to find an algorithm $A_{n,d}$ that approximates   
$S_d$ with error at most $\varepsilon$. More precisely,    
we consider the absolute (ABS) and normalized (NOR) error criteria   
in which    
\begin{align*}   
 n(\varepsilon,S_d)&=n_{\ABS}(\varepsilon, S_d)\, =   
\min\{n\,\colon\,e_n (S_d)\le \varepsilon\},\\   
 n(\varepsilon,S_d)&=n_{\NOR}(\varepsilon,S_d)=   
\min\{n\,\colon\,e_n (S_d) \le \varepsilon\, e_0 (S_d)\}.   
\end{align*}   
In many applications, like in numerical integration or function approximation considered over 
certain reproducing kernel Hilbert spaces, it holds true that $e_0 (S_d)=1$, which means that 
the absolute and the normalized error criteria coincide. This, however, need not necessarily hold, and 
in such cases the initial error may have significant influence on the information complexity of a problem.

\medskip

The concept of \textit{tractability} of a problem refers to the situation when we can in a certain sense 
control the growth of the information complexity when $\varepsilon$ tends to zero or $d$ tends to infinity. 
Indeed, in the classical literature on IBC, a problem is called \textit{tractable} if the information complexity 
grows slower than exponentially in $\varepsilon^{-1}$ and in $d$. Otherwise, the problem is called \textit{intractable}. 
In particular, if the information complexity grows exponentially with $d$, we speak of the \textit{curse of dimensionality}. 
Hence, the curse of dimensionality implies intractability, but not necessarily vice versa. 

If a problem is tractable, one may consider 
several \textit{notions of tractability} by which we classify bounds on the growth of the information complexity. 

The most classic and at the same time most prominent notion of tractability is that of \textit{polynomial tractability (PT)}; 
indeed, we call a problem polynomially tractable in the setting $\CRI \in \{\ABS, \NOR\}$  if 
there exist absolute constants $C,p, q\ge 0$ such that
\begin{equation}\label{eq:def_PT}
n_{\CRI} (\varepsilon, S_d) \le C\, d^q\, \varepsilon^{-p}\quad \forall \varepsilon\in (0,1), \forall d\in\NN.
\end{equation}
It is important to note that the constants $C,p$, and $q$ are independent of $\varepsilon$ and $d$, 
and that \eqref{eq:def_PT} needs to hold for all choices of $\varepsilon$ and $d$ simultaneously for the definition to hold.

If \eqref{eq:def_PT} even holds for $q=0$, we speak of \textit{strong polynomial tractability (SPT)}; in this case the information 
complexity can be bounded independently of the dimension $d$. 

If SPT holds, i.e., if \eqref{eq:def_PT} holds with $q=0$, then the infimum of the $p$ for which this is the case is called the \textit{exponent of strong polynomial tractability}. It is also possible to study the best possible exponents $q$ and $p$ when we have polynomial tractability. However, in this case the optimal exponents are in general not uniquely defined, and we 
may decrease one at the expense of the other, which is why we restrict ourselves to presenting only exponents of strong polynomial tractability here. We refer to \cite[pp. 168--170]{NW08} for a more detailed discussion on trade-offs of the exponents. 

Another important notion of tractability is \textit{weak tractability (WT)}, which describes the boundary case to intractability. Indeed, 
a problem is weakly tractable in the setting $\CRI \in \{\ABS, \NOR\}$ if 
\begin{equation}\label{eq:def_WT}
  \lim_{d+\varepsilon^{-1}\to\infty} \frac{\log n_{\CRI} (\varepsilon, d)}{d+\varepsilon^{-1}}=0.
\end{equation}
If \eqref{eq:def_WT} is fulfilled, $n_{\CRI} (\varepsilon, d)$ does not depend exponentially on $d$ nor on $\varepsilon^{-1}$. 

There are indeed problems which are weakly tractable, but not polynomially tractable, such that the definition of weak 
tractability is justified; a simple example of such a problem is given when the information complexity behaves like $d^{\log d}$. 

The definition of WT has to be dealt with carefully, as the following example illustrates (cf. \cite[p. 7]{NW08}).  Consider a problem where 
\[
 n_{\CRI} (\varepsilon, d) = \mathrm{e}^{\sqrt{d}\,\sqrt{\varepsilon^{-1}}}.
\]
Then,
\[
 \lim_{d\to\infty} \frac{\log n_{\CRI} (\varepsilon, d)}{d + \varepsilon^{-1}} =0, \quad \mbox{and}\quad
 \lim_{\varepsilon^{-1}\to\infty} \frac{\log n_{\CRI} (\varepsilon, d)}{d + \varepsilon^{-1}}=0.
\]
However, we do not have WT, as for any $d\in\NN$ we can choose $\varepsilon_0=1/d$, and then 
\[
 \frac{\log n_{\CRI} (\varepsilon_0, d)}{d + \varepsilon_0^{-1}}=\frac{\sqrt{d}\, \sqrt{d}}{2d}=\frac{1}{2},
\]
which contradicts WT.

We 
further remark that there are various other notions of tractability in the literature, as 
for example \textit{quasi-polynomial tractability (QPT)} (see \cite{GW11} and below), $(s,t)$-weak tractability (see \cite{SW15}), 
or uniform weak tractability (see \cite{S13}). Note that there is a hierarchy between some (though not all) of these tractability notions. 
In particular, SPT implies PT, which in turn implies WT. We will discuss approaches to unify all tractability notions within one framework 
in Section \ref{sec:gen_tractability} below.

\medskip

Whether a given problem is tractable or not crucially depends on the problem settings, such as:
\begin{itemize}
 \item Is the problem considered with respect to the absolute or the normalized error criterion? While there are 
 of course examples where the initial error of a problem equals one, and therefore the absolute and the normalized error criteria coincide, 
 as for example function approximation or 
 numerical integration in certain function spaces like (weighted) Korobov spaces (see, e.g., \cite{NW08} and below), this is (by far) not
 true in general. Indeed, there are situations where the initial error of a problem depends exponentially on the dimension $d$, see, e.g.,
 \cite[Section 3.1.5]{NW08}, where the initial error of a problem from discrepancy theory tends to zero exponentially with increasing $d$, and therefore the problem in the absolute setting is uninteresting even for relatively small error thresholds. However, there exist also problems where the opposite occurs, and the initial error grows exponentially with $d$, see, e.g., \cite{KPW17}. 
 \item Which error measure is considered? In the literature on IBC, one can find numerous different error measures, and here we exemplary highlight 
 three very prominent ones; for instance, one may consider the \textit{worst case} error of an algorithm $A_{n,d}$, which is given by
 \begin{equation}\label{eq:worst-case-err}
   \err^{\rm wor} (A_{n,d}):=\sup_{f\in F} \norm{f-A_{n,d} (f)}_{\calG_d},
 \end{equation}
 where $F$ is a suitably chosen subset of $\calF_d$. Alternatively, algorithms are in some cases randomized by introducing a
 random element $\omega$, such that $A_{n,d}=A_{n,d,\omega}$. Then, one frequently studies the \textit{randomized error}, which 
 is defined as the worst case (again considered over $f\in F$ for $F\subseteq \calF_d$) with respect to an expected value of the error (see
 \cite[Section 4.3.3]{NW08} for a precise definition). Instead of considering the supremal error over all elements in a set $F\subseteq \calF_d$, 
 it is also possible to study the so-called \textit{average case error} of problems. Indeed, assume that each of the spaces $\{\calF_d\}_d$ is equipped 
 with a corresponding zero-mean Gaussian measure $\mu_d$. Then the \textit{average case error} of an algorithm $A_{n,d}$ is defined as
  \begin{equation}\label{eq:avg-case-err}
   \err^{\rm avg} (A_{n,d}):=\left(\int_{\calF_d} \norm{f-A_{n,d} (f)}_{\calG_d}^2 \mu (\rd f)\right)^{1/2}.
 \end{equation}
 \item What kind of information do we allow? It is essential to specify 
 the set $\Lambda$ which the mappings $L_i$ are elements of. The most common choices are $\Lambda=\Lambda^{\rm all}$, where all continuous linear functionals are allowed, and $\Lambda=\Lambda^{\rm std}$, the class of \textit{standard information}, where only function evaluations are allowed, provided that the $\calF_d$ are function spaces. A simple problem to illustrate the effect of switching the available information is 
 numerical integration: if we allow arbitrary linear information, an integration problem becomes trivial since we can choose $L_1 (f)$ equal 
 to the integral of $f$ and thus solve the problem exactly with only one 
 information mapping. If we only allow standard information, we need to analyze quadrature rules for numerical integration, which is much more involved. For other problems, as for example, function approximation, the situation is less clear, and there are cases in which there is little to no 
 difference in whether one uses $\Lambda^{\rm all}$ or $\Lambda^{\rm std}$. 
 Indeed, there has been considerable progress on the question of how powerful 
 $\Lambda^{\rm std}$ is in comparison to $\Lambda^{\rm all}$ for the problem 
 of $L_p$-approximation, especially in the cases $p=2$ and $p=\infty$. We exemplary refer to the papers \cite{DKU23, KU20,NSU22} for $L_2$-approximation on Hilbert spaces and to the papers \cite{DKU23, KPUU23, KU21} for $L_2$-approximation on general function classes (and the references therein).
 
 We also remark that there is work by various authors considering other types of information than just $\Lambda^{\rm all}$ or $\Lambda^{\rm std}$, such as for example absolute value information (cf. \cite{PSW20}).
 \end{itemize}

Before we proceed with the core parts of this overview article, we would like to recall from \cite[p. 149]{NW08} that---maybe counter-intuitively---a tractable problem is not necessarily easier to handle than an intractable problem. Indeed, suppose that we have a Problem 1 with
\[
  n_{\CRI}^{(\rm{P}1)} (\varepsilon, d) \simeq 10\, d^{10}\, \varepsilon^{-10}
\]
and a Problem 2 with
\[
  n_{\CRI}^{(\rm{P}2)} (\varepsilon, d) \simeq 1.01^d\, \varepsilon^{-1}.
\]
Obviously, Problem 1 is PT, whereas Problem 2 is intractable. However, for $\varepsilon=1/10$ and $d=10$, we have
$ n_{\CRI}^{(\rm{P}1)} (\varepsilon, d)\simeq 10^{21}$ and $n_{\CRI}^{(\rm{P}2)} (\varepsilon, d) \simeq 11$. Hence, from a practical point of view, Problem 2 will in many situations be easier to deal with than Problem 1.

\medskip

The rest of the paper is structured as follows. In the subsequent Section \ref{sec:example}, we illustrate some 
of the theory explained in the introduction by a concrete example defined on Korobov spaces. We shall return to this example 
repeatedly in this paper, namely in Sections~\ref{sec:example_2}, \ref{sec:example_3}, and \ref{sec:example_4}. 
In Section \ref{sec:Hilbert}, 
we give a first impression of tractability results for problems defined on sequences of Hilbert spaces, mostly with access to information from 
$\Lambda^{\rm all}$. In Section \ref{sec:exp_tractability}, we outline the concept of exponential tractability, and we explain 
generalized tractability in Section \ref{sec:gen_tractability}.

\section{Example: problems on Korobov spaces}\label{sec:example}

To illustrate the theoretical concepts and findings in this paper, we will consider a particular function space that 
has been studied in numerous papers on multivariate problems, and is of particular interest in quasi-Monte Carlo (QMC) integration, see, e.g., 
\cite{DKP22, DKS13, NW08}. 

The $d$-variate Korobov space $\calH_{\kor,d,\alpha}$ is a reproducing kernel Hilbert space of one-periodic functions on 
$[0,1]^d$, where periodicity is understood component-wise. For its definition, we need a real parameter $\alpha>1/2$ and 
a function $r_{2\alpha}: \ZZ\to \RR$, defined by
\[
 r_{2\alpha}(h):=\begin{cases}
                  1 & \mbox{if $h=0$,}\\
                  \abs{h}^{2\alpha} & \mbox{if $h\neq 0$,}
                 \end{cases}
\]
for any $h\in\ZZ$. We remark that some authors write $\alpha$ instead of $2\alpha$ in the previous definition, with the 
according adaptions in all other definitions to follow. The reason why we choose to use $2\alpha$ is that then, for integer choices 
of $\alpha$, this parameter is directly linked to the number of existing square integrable derivatives of the elements of the space. 

We also need a multivariate version of this function, denoted by $r_{d,2\alpha}$ for $d\in\NN$, given as
\[
 r_{d,2\alpha}(\bsh):=\prod_{j=1}^d r_{2\alpha} (h_j), 
\]
for any $\bsh=(h_1,\ldots,h_d) \in \ZZ^d$.
For a function $f\in L_2 ([0,1]^d)$, we consider its representation by its Fourier series,
\[
 f(\bsx)\sim\sum_{\bsh\in\ZZ^d} \widehat{f} (\bsh) \mathrm{e}^{2\pi\icomp\bsh\cdot\bsx}\quad \mbox{for}\ \bsx\in [0,1]^d,
\]
where the $\bsh$-th Fourier coefficient for $\bsh\in\ZZ^d$ is given by 
\[
 \widehat{f}(\bsh):=\int_{[0,1]^d} f(\bsx) \mathrm{e}^{-2\pi\icomp\bsh\cdot\bsx} \rd \bsx.
\]
The Korobov space $\calH_{\kor,d,\alpha}$ is a subspace of $L_2 ([0,1]^d)$ and is defined as the set of 
all one-periodic functions with absolutely convergent Fourier series and a finite norm $\norm{f}_{\kor,d,\alpha}
=\langle f, f\rangle_{\kor,d,\alpha}^{1/2}$, where the inner product is given by 
\[
 \langle f, g\rangle_{\kor,d,\alpha}=\sum_{\bsh\in\ZZ^d} r_{d,2\alpha} (\bsh) \widehat{f}(\bsh) \overline{\widehat{g} (\bsh)}.
\]
As mentioned before, the parameter $\alpha$ in the above definitions is directly related to the number of 
partial derivatives of the elements $f\in \calH_{\kor,d,\alpha}$, which is why $\alpha$ is commonly 
referred to as the smoothness parameter of the space (see, e.g., \cite[Chapter 2]{DKP22} for details). 

For later considerations, we also need to determine an orthonormal basis of $\calH_{\kor,d,\alpha}$. 
It is easily checked that the functions $\eta_{\bsh}$, $\bsh\in\ZZ^d$, with
\begin{equation}\label{eq:onb_korobov}
 \eta_{\bsh}(\bsx) (r_{d,2\alpha}(\bsh))^{-1/2} \mathrm{e}^{-2\pi\icomp\bsh\cdot\bsx}=(r_{d,\alpha} (\bsh))^{-1} \mathrm{e}^{-2\pi\icomp\bsh\cdot\bsx}
 \quad \mbox{for $\bsx\in [0,1]^d$,}
\end{equation}
form an orthonormal basis. 

It is also helpful to note that $\calH_{\kor,d,\alpha}$ is actually a tensor product space (see, e.g., \cite{DKP22, NW08}). 
Indeed, $\calH_{\kor,d,\alpha}$ is the $d$-fold tensor product of the univariate Korobov space $\calH_{\kor,1,\alpha}$, 
\begin{equation}\label{eq:kor_tensor}
 \calH_{\kor,d,\alpha} =
 \underbrace{\calH_{\kor,1,\alpha} \otimes \calH_{\kor,1,\alpha} \otimes \cdots \otimes \calH_{\kor,1,\alpha}}_{\mbox{$d$ times}},
\end{equation}
which is to be understood as the closure (with respect to the norm $\norm{\cdot}_{\kor,d,\alpha}$) of the span 
of all functions $\prod_{j=1}^d f_j$, with all $f_j \in \calH_{\kor,1,\alpha}$.

\medskip

We may study a multivariate problem on $\calH_{\kor,d,\alpha}$, expressed as the problem of 
approximating an operator $S_d:\calH_{\kor,d,\alpha}\to \calG_d$ for some suitable choice of 
a normed space $\calG_d$. Indeed, we can do this for any $d\in\NN$, and thus study the whole 
sequence of problems $\{S_d:\calH_{\kor,d,\alpha}\to\calG_d\}_{d\in\NN}$, and its tractability properties. 
Below, we will deal with two particular choices of $S_d$ for this example, namely
\begin{itemize}
 \item The case when $\calG_d=L_2([0,1]^d)$ for $d\in\NN$ and $S_d(f)=\EMB_d(f)=f$ is the embedding 
 from $\calH_{\kor,d,\alpha}$ to $L_2([0,1]^d)$, i.e., $L_2$-approximation of functions,
 \item the case when $\calG_d=\RR$, and $S_d(f)=\INT_d (f)=\int_{[0,1]^d}f(\bsx)\rd \bsx$, i.e., numerical integration. 
\end{itemize}
For the first case, when we study $L_2$-approximation, we can allow either $\Lambda^{\rm all}$ or $\Lambda^{\rm std}$ as the information class. 
Obviously, the problem is more challenging if we restrict ourselves to the latter. It is easy to see that for this problem the initial error equals one, 
so the absolute and the normalized settings coincide.

For the integration problem, if we allow $\Lambda^{\rm all}$, we may just choose an algorithm $A_{n,d} (f) = \INT_d (f)$ for $n=1$ and obtain 
that $n_{\CRI}(\varepsilon,d)=1$ and tractability. If we only allow $\Lambda^{\rm std}$, the problem becomes much harder and there 
is a huge literature on numerically integrating elements of $\calH_{\kor,d,\alpha}$ by quasi-Monte Carlo (QMC) rules, see, among many others, 
\cite{DKP22,DKS13}. Note, however, that in the case of $\Lambda^{\rm std}$ approximating $\INT_d$ is a problem not harder than approximating 
$\EMB_d$. For a proof of this result, we refer to \cite{NSW04}. We further 
remark that also for the integration problem the initial error equals one.

We shall return to this example again in Sections~\ref{sec:example_2},~\ref{sec:example_3}, and~\ref{sec:example_4}. In general, 
positive results for integration problems can often be obtained by providing suitable integration rules. For deriving lower bounds on 
the integration error or the information complexity, a common technique is to use so-called bump functions, which are well chosen functions
that yield a particularly high error for concrete integration rules. We also would like to mention that 
recently a further technique for showing lower bounds for the integration of periodic functions has been introduced, 
that is based on a modification of the Schur product theorem for matrices. We refer to \cite{KV23} for details on 
these methods.

\section{Tractability for linear problems on Hilbert spaces}\label{sec:Hilbert}

Very generally speaking, we have several options to determine whether a given 
problem is tractable or not: 
\begin{itemize}
 \item By showing upper error bounds for particular algorithms.
 \item By showing lower error bounds for all admissible algorithms (which usually is much harder than showing upper bounds for concrete algorithms).
 \item By checking criteria which are equivalent to selected tractability notions and maybe easier to verify, which works for special settings. 
 Such conditions may be formulated, e.g., in terms of the properties of the operators $S_d$ and the spaces $\calF_d, \calG_d$, as will be shown below. 
\end{itemize}
In this section, we will treat a setting where we can proceed like in the last point from above, and we will show exemplary how criteria equivalent to certain tractability notions can be derived. 

\subsection{Criteria for tractability}

The setting we consider here is based on the assumption that the 
$\{\calF_d\}_{d\in\NN}$ and $\{\calG_d\}_{d\in\NN}$ are Hilbert spaces, and 
we let $\{S_d : \calF_d \to \calG_d\}_{d \in \NN}$ be a sequence of
compact linear solution operators with adjoint operators $S_d^*$ such that $S_d^*S_d : \calF_d \to \calF_d$ has eigenvalues and orthonormal eigenvectors
\begin{equation} \label{eq:eigenvalues}
 \lambda_{1,d} \ge \lambda_{2,d} \ge \cdots \ge 0, \qquad u_{1,d}, u_{2,d}, \ldots, \qquad d \in \NN.
\end{equation}
This means that the $\sqrt{\lambda_{i,d}}$ are the singular values of the operator $S_d$. The assumption that the $S_d$ are compact implies that $\lambda_{i,d}$ converges to zero as $i$ tends to infinity for every $d$, which in turn implies that the problem is solvable by suitable algorithms. Indeed, compactness of $S_d$ is equivalent to $n_{\CRI} (\varepsilon, d) <\infty$ for all $\varepsilon>0$ (see, e.g., \cite[Section 4.2.3]{NW08} for further details). 

Let us now assume that we have access to information from $\Lambda^{\rm all}$, i.e., we allow the mappings $L_i$ to be arbitrary continuous linear functionals in 
$\calF^*$. Furthermore, we also allow the algorithms to be adaptive. We would like to approximate $S_d$ by algorithms $A_{n,d}$, and study the worst case error 
over the unit ball $\calB_d$ in $\calF_d$, i.e., we consider
\[
 e_n (S_d)=\inf_{A_{n,d}}\sup_{f\in\calB_d} \norm{S_d(f)-A_{n,d}(f)}_{\calG_d}.
\]
In this setting, the optimal approximate solution operator is known to be
\[
A_{n,d}^{\rm opt}(f) = \sum_{i=1}^n S_d(u_{i,d}) \langle f, u_{i,d}\rangle_{\calF_d},
\]
where $\langle \cdot, \cdot \rangle_{\calF_d}$ is the inner product in $\calF_d$. 
Furthermore, it is known that $\err^{\rm wor}(A_{n,d}^{\rm opt})=\sqrt{\lambda_{n+1,d}}$,
and so
\begin{equation}\label{eq:infcomp_Hilbert}
 n_{\ABS} (\varepsilon,d) = \min \{n \in \NN_0\colon \sqrt{\lambda_{n+1,d}} \le \varepsilon\},
\end{equation}
see \cite[Section 5.1]{NW08}. 
For the sake of brevity and clarity of notation, we restrict ourselves to considering only the absolute error criterion in this context.
If we alternatively would consider the normalized error criterion, we would divide the error by the initial error, which is the operator norm of $S_d$, 
which is $\sqrt{\lambda_{1,d}}$ in the current setting. I.e., in the normalized setting the information complexity is given by
\begin{equation}\label{eq:infcomp_Hilbert_NOR}
 n_{\NOR} (\varepsilon,d) = \min \{n \in \NN_0\colon \sqrt{\lambda_{n+1,d}} \le \varepsilon\, \sqrt{\lambda_{1,d}}\}.
\end{equation}
This means that, essentially, in the tractability analysis in this section, all criteria would be normalized by the first singular value $\lambda_{1,d}$. 

Let us, however, continue with the absolute setting to keep notation simple. Due to \eqref{eq:infcomp_Hilbert}, it is natural to study the 
decay of the eigenvalues $\lambda_{n,d}$ if we would like to derive necessary and sufficient conditions for various tractability notions, 
which has been done extensively in the literature on IBC. To give an example, the following theorem is due to Wo\'{z}niakowski (see \cite{W94a}).
Since the proof of the following theorem is typical for such results, we provide the key steps of the proof (for a version including all 
technical details, see, e.g., \cite[Proof of Theorem 5.1]{NW08}).
\begin{theorem}[Wo\'{z}niakowski]\label{thm:ALG_SPT}
 Let $\{\calF_d\}_{d\in\NN}$ and $\{\calG_d\}_{d\in\NN}$ be Hilbert spaces, and let 
$\{S_d\colon\calF_d\to \calG_d\}_{d\in\NN}$ be compact linear operators. Consider information from $\Lambda^{\rm all}$ and 
the absolute worst case setting on the unit balls $\calB_d$ of the $\calF_d$. 

Then we have SPT if and only if there exists a constant $\tau>0$ and a constant $L\in\NN$ such that
\begin{equation}\label{eq:cond_SPT}
 M:=\sup_{d\in\NN} \left(\sum_{n=L}^\infty \lambda_{n,d}^\tau \right)^{1/\tau} <\infty. 
\end{equation}
The exponent of SPT is given by
\[
 p^*:=\inf\{2\tau\colon \tau \ \mbox{satisfies \eqref{eq:cond_SPT}}\}.
\]
\end{theorem}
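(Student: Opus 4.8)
The plan is to exploit the explicit description of the information complexity in \eqref{eq:infcomp_Hilbert}, which (using the monotonicity of the eigenvalues in \eqref{eq:eigenvalues}) says $n_{\ABS}(\varepsilon,d)=\#\{n\ge 1:\lambda_{n,d}>\varepsilon^{2}\}$. This reduces the whole statement to a purely real-analytic equivalence between a polynomial bound on this counting function and polynomial decay of the eigenvalue arrays $(\lambda_{n,d})_{n}$, uniform in $d$. I would work throughout with the absolute criterion as in \eqref{eq:infcomp_Hilbert}; the normalized case is identical after dividing every $\lambda_{n,d}$ by $\lambda_{1,d}$.

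\emph{Sufficiency of \eqref{eq:cond_SPT}.} First I would assume $\sum_{n=L}^{\infty}\lambda_{n,d}^{\tau}\le M^{\tau}$ for all $d$ and turn it into a uniform decay rate: since the $\lambda_{n,d}$ decrease, for every $n\ge L$ the partial sum $\sum_{k=L}^{n}\lambda_{k,d}^{\tau}$ has $n-L+1$ terms, each at least $\lambda_{n,d}^{\tau}$, so $\lambda_{n,d}\le M\,(n-L+1)^{-1/\tau}$ for all $d$. Feeding this into the counting formula, the first $L-1$ eigenvalues contribute at most $L-1$, and for $n\ge L$ the inequality $\lambda_{n,d}>\varepsilon^{2}$ forces $n<L-1+M^{\tau}\varepsilon^{-2\tau}$; hence $n_{\ABS}(\varepsilon,d)\le L-1+M^{\tau}\varepsilon^{-2\tau}$. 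Because $\varepsilon\in(0,1)$ and $L\ge 1$, this is at most $(L-1+M^{\tau})\,\varepsilon^{-2\tau}$, which is exactly \eqref{eq:def_PT} with $q=0$ and $p=2\tau$. So SPT holds, and the exponent of SPT satisfies $p^{*}\le 2\tau$ for every $\tau$ admissible in \eqref{eq:cond_SPT}; taking the infimum gives $p^{*}\le\inf\{2\tau:\tau\text{ satisfies }\eqref{eq:cond_SPT}\}$.

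\emph{Necessity of \eqref{eq:cond_SPT}.} Conversely, assume SPT and fix any exponent $p>0$ together with a constant $C$ such that $n_{\ABS}(\varepsilon,d)\le C\,\varepsilon^{-p}$ for all $\varepsilon\in(0,1)$ and $d\in\NN$ (the degenerate case $p=0$, where $n_{\ABS}$ is bounded and hence all but finitely many $\lambda_{n,d}$ vanish, is trivial). The key step is to read \eqref{eq:infcomp_Hilbert} contrapositively: if $\lambda_{n,d}>\varepsilon^{2}$ then by monotonicity $n\le n_{\ABS}(\varepsilon,d)\le C\varepsilon^{-p}$, and letting $\varepsilon$ decrease to $(C/n)^{1/p}$ (which lies in $(0,1)$ precisely when $n>C$) yields the uniform bound $\lambda_{n,d}\le C^{2/p}\,n^{-2/p}$ for all $n>C$. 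Now take $L:=\lfloor C\rfloor+1$ and any $\tau>p/2$; then $\sum_{n=L}^{\infty}\lambda_{n,d}^{\tau}\le C^{2\tau/p}\sum_{n=L}^{\infty}n^{-2\tau/p}$, a convergent series whose value is independent of $d$, so \eqref{eq:cond_SPT} holds. Since $\tau>p/2$ and the admissible $p$ were arbitrary, $\inf\{2\tau:\tau\text{ satisfies }\eqref{eq:cond_SPT}\}\le p^{*}$, and combining with the previous paragraph gives the stated formula for $p^{*}$.

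There is no deep obstacle here---as the authors remark, the argument is prototypical---and the real content is just the equivalence ``polynomial complexity $\Longleftrightarrow$ polynomial eigenvalue decay $\lambda_{n,d}=O(n^{-2/p})$.'' The two points that demand care are the restriction $\varepsilon\in(0,1)$ when inverting inequalities, and, more importantly, the bookkeeping around the cut-off index $L$: one cannot take $L=1$ in general, because SPT only controls $\lambda_{n,d}$ for $n$ beyond the dimension-independent constant $C$, leaving the finitely many leading eigenvalues unconstrained; conversely, allowing $L>1$ in \eqref{eq:cond_SPT} is exactly what lets a bounded ``head'' of large eigenvalues be absorbed into the additive constant of the complexity bound. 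Getting this finite-head/infinite-tail split right is the only subtlety.
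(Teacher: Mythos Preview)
Your proof is correct and follows essentially the same route as the paper's own sketch: both directions hinge on translating SPT into the uniform eigenvalue decay $\lambda_{n,d}\le C'\,n^{-2/p}$ (via the counting interpretation of \eqref{eq:infcomp_Hilbert} and monotonicity) and then summing; your choice $L=\lfloor C\rfloor+1$ and $\tau>p/2$ matches the paper exactly, and your commentary on the role of the cut-off $L$ correctly identifies the one nontrivial bookkeeping point.
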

\begin{proof}[Sketch of the proof of Theorem \ref{thm:ALG_SPT}]
 The main steps in the proof are as follows. 
 
 We first show necessity of \eqref{eq:cond_SPT}. Indeed, assume that we have SPT. I.e., there 
 exist absolute constants $C,p\ge 0$ such that
 \[
  n_{\ABS} (\varepsilon,d)\le C\, \varepsilon^{-p} \quad \forall\varepsilon\in (0,1), \forall d\in\NN.
 \]
 We can assume that $C\ge 1$ and that $p=p^*+\delta$, where $p^*$ is the exponent of SPT and $\delta$ is some (small) positive number.
 By \eqref{eq:infcomp_Hilbert}, we know that 
 \[
  \lambda_{n_{\ABS} (\varepsilon,d)+1,d} \le \varepsilon^2,
 \]
 and as the sequence of the $\lambda_{n,d}$ is non-increasing, it follows that 
 \[
  \lambda_{\lfloor C\, \varepsilon^{-p}\rfloor+1,d} \le \varepsilon^2.
 \]
 Next, define $n(\varepsilon):=\lfloor C\, \varepsilon^{-p}\rfloor+1$ for $\varepsilon\in (0,1)$. When 
 we vary the value of $\varepsilon\in (0,1)$, then $n(\varepsilon)$ takes on the values $\lfloor C\rfloor +1$, 
 $\lfloor C\rfloor +2$, $\lfloor C\rfloor + 3$, \ldots .

 On the other hand, it is obviously true that $n(\varepsilon)\le C\, \varepsilon^{-p}+1$, which is equivalent to
 \[
  \varepsilon^2 \le \left(\frac{C}{n(\varepsilon)-1}\right)^{2/p}, 
 \]
 which yields
 \[
  \lambda_{n(\varepsilon),d}=\lambda_{\lfloor C\, \varepsilon^{-p}\rfloor+1,d}\le \varepsilon^2 \le \left(\frac{C}{n(\varepsilon)-1}\right)^{2/p}.
 \]
 This observation holds for any $\varepsilon\in (0,1)$, and by varying $\varepsilon$ (and thereby also $n(\varepsilon)$), we obtain
 \[
  \lambda_{n,d}\le \left(\frac{C}{n-1}\right)^{2/p},\quad \forall n\ge \lfloor C \rfloor +1.
 \]
 From this, it is not hard to derive the condition \eqref{eq:cond_SPT} by choosing $L=\lfloor C \rfloor +1 \ge 2$ and $\tau>p/2$. Since $p=p^*+\delta$, we can choose $\tau$ arbitrarily close to $p^*/2$. 
 
 \medskip
 
 For the converse, assume now that \eqref{eq:cond_SPT} holds for some $\tau>0$ and some $L\in\NN$. 
 Due to the ordering of the eigenvalues we have, for any $n\ge L$, that
 \[
  (n-L+1)\, \lambda_{n,d} \le \sum_{i=L}^n \lambda_{i,d} \le \sum_{i=L}^\infty \lambda_{i,d},
 \]
 and with analogous reasoning
 \[
  (n-L+1)^{1/\tau}\, \lambda_{n,d} \le \left(\sum_{i=L}^n \lambda_{i,d}^\tau\right)^{1/\tau} \le \left(\sum_{i=L}^\infty \lambda_{i,d}^\tau\right)^{1/\tau} \le M.  
 \]
 
 Now, choose the smallest $n_0\ge L$ such that $(n_0-L+1)^{-1/\tau} \, M \le \varepsilon^2$, which guarantees that
 $\lambda_{n_0+1,d}\le \varepsilon^2$, and this implies
 \[
  n_{\ABS} (\varepsilon,d) \le n_0.
 \]
 From the latter inequality and from the choice of $n_0$, one can deduce SPT by elementary algebra. Furthermore, it is easy to see that the exponent of SPT is then 
 at most $\inf\{2\tau\colon \tau \ \mbox{satisfies \eqref{eq:cond_SPT}}\}$. Combining the observations on the relation between $p^*$ and $\tau$ in the two steps of the proof shows that the claim on the exponent of SPT holds true. 
\end{proof}
Results of a similar flavor as Theorem \ref{thm:ALG_SPT} exist for many other 
tractability notions; for instance, we have the following theorem on polynomial tractability, which is also due to 
Wo\'{z}niakowski (see \cite{W94a}).
\begin{theorem}[Wo\'{z}niakowski]\label{thm:ALG_PT}
 Let $\{\calF_d\}_{d\in\NN}$ and $\{\calG_d\}_{d\in\NN}$ be Hilbert spaces, and let 
$\{S_d\colon\calF_d\to \calG_d\}_{d\in\NN}$ be compact linear operators. Consider information from $\Lambda^{\rm all}$ and 
the absolute worst case setting on the unit balls $\calB_d$ of the $\calF_d$. 

Then we have PT if and only if there exist constants $\tau_1,\tau_2\ge 0$ and $\tau_3,H>0$ such that
\begin{equation}\label{eq:cond_PT}
 M:=\sup_{d\in\NN}\, d^{-\tau_1}\, \left(\sum_{n=\lceil H d^{\tau_2}\rceil}^\infty \lambda_{n,d}^{\tau_3} \right)^{1/\tau_3} <\infty. 
\end{equation}
\end{theorem}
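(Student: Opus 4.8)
The plan is to mirror the two-step argument of the proof of Theorem~\ref{thm:ALG_SPT}, upgrading the $d$-independent quantities there ($C$ and $L$) to quantities that may grow polynomially in $d$. Throughout I rely on the identity \eqref{eq:infcomp_Hilbert}, which pins down $n_{\ABS}(\varepsilon,d)$ as the first index at which $\sqrt{\lambda_{n,d}}$ falls below~$\varepsilon$.

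\textbf{Necessity.} Suppose we have PT, so that $n_{\ABS}(\varepsilon,d)\leq C\,d^q\,\varepsilon^{-p}$ for all $\varepsilon\in(0,1)$ and $d\in\NN$, with $C\geq 1$ and $p,q\geq 0$; I may assume $p>0$, since for $\varepsilon\in(0,1)$ one has $\varepsilon^{-p'}\geq 1$ for every $p'>0$ and so $p$ may be replaced by any positive number. Arguing exactly as in the first half of the proof of Theorem~\ref{thm:ALG_SPT}, \eqref{eq:infcomp_Hilbert} together with the monotonicity of the eigenvalues gives $\lambda_{\lfloor C d^q\varepsilon^{-p}\rfloor+1,d}\leq\varepsilon^2$, and letting $\varepsilon$ run through $(0,1)$ yields the pointwise decay estimate
\[
 \lambda_{n,d}\leq \left(\frac{C\,d^q}{n-1}\right)^{2/p}\qquad\text{for all }n\geq \lfloor C\,d^q\rfloor+1 .
\]
Now I would fix any $\tau_3>p/2$, put $\tau_2:=q$ and $H:=C+1$ (so that $\lceil H d^{\tau_2}\rceil\geq\lfloor C d^q\rfloor+1$), and sum the $\tau_3$-th powers of this bound from $n=\lceil H d^{\tau_2}\rceil$ onwards: the factor $(C\,d^q)^{2\tau_3/p}$ comes out of the sum, leaving the convergent tail of $\sum_m m^{-2\tau_3/p}$, so that $\bigl(\sum_{n\geq\lceil H d^{\tau_2}\rceil}\lambda_{n,d}^{\tau_3}\bigr)^{1/\tau_3}$ is at most a $d$-independent constant times $d^{2q/p}$. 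Hence \eqref{eq:cond_PT} holds with $\tau_1:=2q/p$ and the $\tau_2,\tau_3,H$ just chosen.

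\textbf{Sufficiency.} Conversely, assume \eqref{eq:cond_PT} and abbreviate $k_d:=\lceil H d^{\tau_2}\rceil$. As in the second half of the proof of Theorem~\ref{thm:ALG_SPT}, the ordering of the eigenvalues gives, for every $n\geq k_d$,
\[
 (n-k_d+1)\,\lambda_{n,d}^{\tau_3}\;\leq\;\sum_{i=k_d}^n\lambda_{i,d}^{\tau_3}\;\leq\;\bigl(M\,d^{\tau_1}\bigr)^{\tau_3},
\]
whence $\lambda_{n,d}\leq M\,d^{\tau_1}(n-k_d+1)^{-1/\tau_3}$. Taking $n_0:=k_d+\bigl\lceil (M\,d^{\tau_1}\varepsilon^{-2})^{\tau_3}\bigr\rceil$ then forces $\lambda_{n_0+1,d}\leq\varepsilon^2$, so $n_{\ABS}(\varepsilon,d)\leq n_0$ by \eqref{eq:infcomp_Hilbert}; and since $n_0\leq H d^{\tau_2}+(M d^{\tau_1})^{\tau_3}\varepsilon^{-2\tau_3}+2$ with $\varepsilon^{-2\tau_3}\geq 1$, the additive terms are absorbed into the last one, and I obtain PT with $q=\max(\tau_2,\tau_1\tau_3)$ and $p=2\tau_3$.

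\textbf{Main obstacle.} The conceptual content is the same as in Theorem~\ref{thm:ALG_SPT} --- turning a pointwise bound on $n_{\ABS}$, uniform in $d$, into a uniform summability statement for the eigenvalue tails, and back. The one genuinely new piece of bookkeeping, and the only delicate point, is the lower summation index: in the necessity direction the decay estimate for $\lambda_{n,d}$ holds only beyond $n=\lfloor C d^q\rfloor+1$, which forces the sum in \eqref{eq:cond_PT} to start at a polynomially growing index $\lceil H d^{\tau_2}\rceil$; conversely, one must check that in the sufficiency direction discarding the first $\asymp d^{\tau_2}$ eigenvalues costs only an additive term of polynomial order in $d$ for the information complexity, which is precisely what keeps the final estimate of the shape $C\,d^q\,\varepsilon^{-p}$. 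Everything else is the same elementary algebra already carried out for Theorem~\ref{thm:ALG_SPT}.
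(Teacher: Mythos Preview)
Your proposal is correct and follows essentially the same approach that the paper points to: the paper does not spell out a proof of Theorem~\ref{thm:ALG_PT} but refers to \cite[Proof of Theorem~5.1]{NW08} and remarks that Theorem~\ref{thm:ALG_SPT} is the special case $\tau_1=\tau_2=0$, so the intended argument is precisely the $d$-dependent upgrade of the SPT proof that you carry out. Your bookkeeping for the starting index $\lceil H d^{\tau_2}\rceil$ in both directions and the absorption of the additive terms into a single $C'\,d^{\max(\tau_2,\tau_1\tau_3)}\,\varepsilon^{-2\tau_3}$ bound are handled correctly.
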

For a proof of Theorem \ref{thm:ALG_PT}, we again refer to \cite[Proof of Theorem 5.1]{NW08}. Indeed, Theorem \ref{thm:ALG_SPT} is a special case of Theorem~\ref{thm:ALG_PT} and the proof of the latter automatically yields a proof of the former, by setting $\tau_1=\tau_2=0$ in Theorem~\ref{thm:ALG_PT} and choosing $L$ accordingly in Theorem \ref{thm:ALG_SPT}. 
As a further example, we also state a theorem with an equivalent condition for WT due to Werschulz and Wo\'{z}niakowski (see \cite{WW17}, to which we also refer for a proof). 
\begin{theorem}[Werschulz, Wo\'{z}niakowski]\label{thm:ALG_WT}
Let $\{\calF_d\}_{d\in\NN}$ and $\{\calG_d\}_{d\in\NN}$ be Hil\-bert spaces, and let 
$\{S_d\colon\calF_d\to \calG_d\}_{d\in\NN}$ be compact linear operators. Consider information from $\Lambda^{\rm all}$ and 
the absolute worst case setting on the unit balls $\calB_d$ of the $\calF_d$. 

We have WT if and only if 
\begin{equation}\label{eq:cond_WT}
 \sup_{d\in\NN} e^{-cd} \sum_{n=1}^\infty \mathrm{e}^{-c\,\lambda_{n,d}^{-1/2}} <\infty\quad \forall c>0. 
\end{equation}
\end{theorem}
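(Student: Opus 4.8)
The plan is to reduce everything, via the information-complexity formula \eqref{eq:infcomp_Hilbert}, to an elementary comparison between the information complexity viewed as a counting function of the singular values and the sum appearing in \eqref{eq:cond_WT}. The first step I would carry out is a bookkeeping reformulation of weak tractability. By \eqref{eq:infcomp_Hilbert}, $n_{\ABS}(\varepsilon,d)=\#\{n\in\NN\colon\lambda_{n,d}>\varepsilon^2\}=\#\{n\in\NN\colon\lambda_{n,d}^{-1/2}<\varepsilon^{-1}\}$, so, setting $a_{n,d}:=\lambda_{n,d}^{-1/2}\in(0,\infty]$ (non-decreasing in $n$, tending to $+\infty$ by compactness, with $a_{n,d}=+\infty$ and $\mathrm{e}^{-c\,a_{n,d}}=0$ once $\lambda_{n,d}=0$), the quantity $n_{\ABS}(\varepsilon,d)$ is precisely the counting function of the sequence $(a_{n,d})_n$ evaluated at $\varepsilon^{-1}$. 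I would then record that \eqref{eq:def_WT} in the absolute setting is equivalent to the statement: for every $c>0$ there is a constant $C_c\ge1$ with $n_{\ABS}(\varepsilon,d)\le C_c\,\mathrm{e}^{c(d+\varepsilon^{-1})}$ for all $\varepsilon\in(0,1)$ and all $d\in\NN$. The ``only if'' part here uses the limit in \eqref{eq:def_WT} for the range where $d+\varepsilon^{-1}$ is large, together with the finiteness of $n_{\ABS}(1/K,d)$ from \eqref{eq:infcomp_Hilbert} (and the monotonicity of $n_{\ABS}$ in $\varepsilon$) to absorb the remaining bounded region $d\le K$, $\varepsilon\ge1/K$ into the constant; the ``if'' part is immediate upon dividing by $d+\varepsilon^{-1}$ and letting $c\downarrow0$.

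For the implication \eqref{eq:cond_WT}$\Rightarrow$WT, I would fix $c>0$, put $B_c:=\sup_{d\in\NN}\mathrm{e}^{-cd}\sum_{n}\mathrm{e}^{-c\,a_{n,d}}<\infty$, and observe that each of the $n_{\ABS}(\varepsilon,d)$ indices counted satisfies $a_{n,d}<\varepsilon^{-1}$ and hence contributes a summand exceeding $\mathrm{e}^{-c\varepsilon^{-1}}$; therefore $n_{\ABS}(\varepsilon,d)\,\mathrm{e}^{-c\varepsilon^{-1}}\le\sum_n\mathrm{e}^{-c\,a_{n,d}}\le B_c\,\mathrm{e}^{cd}$, i.e.\ $n_{\ABS}(\varepsilon,d)\le B_c\,\mathrm{e}^{c(d+\varepsilon^{-1})}$, and the reformulation of the previous paragraph yields WT.

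For the reverse implication WT$\Rightarrow$\eqref{eq:cond_WT}, I would fix $c>0$, invoke the reformulation with parameter $c/2$ to get $n_{\ABS}(\varepsilon,d)\le C_{c/2}\,\mathrm{e}^{(c/2)(d+\varepsilon^{-1})}$, and then split the sum over integer-width blocks of values of $a_{n,d}$: using monotonicity, $\sum_n\mathrm{e}^{-c\,a_{n,d}}\le\sum_{k\ge0}\mathrm{e}^{-ck}\,\#\{n\colon a_{n,d}<k+1\}\le\sum_{k\ge0}\mathrm{e}^{-ck}\,n_{\ABS}(\tfrac{1}{k+2},d)$, where the last step rewrites $\#\{n\colon a_{n,d}<k+1\}\le\#\{n\colon a_{n,d}<k+2\}$ back through \eqref{eq:infcomp_Hilbert} (legitimate since $1/(k+2)\in(0,1)$). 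Inserting the bound from the reformulation turns the right-hand side into $C_{c/2}\,\mathrm{e}^{c}\,\mathrm{e}^{(c/2)d}\sum_{k\ge0}\mathrm{e}^{-(c/2)k}$, a convergent geometric series, so that $\mathrm{e}^{-cd}\sum_n\mathrm{e}^{-c\,a_{n,d}}\le C_{c/2}\,\mathrm{e}^{c}/(1-\mathrm{e}^{-c/2})$ uniformly in $d$; this is \eqref{eq:cond_WT}.

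The only genuinely delicate point is the equivalence, in the first paragraph, of the $\limsup$-type definition \eqref{eq:def_WT} with the uniform sub-exponential bound on the counting function; everything after that is routine, with the mild subtlety that, because $\varepsilon$ ranges over the open interval $(0,1)$, one evaluates $n_{\ABS}$ at $1/(k+2)$ rather than $1/(k+1)$ in the block estimate, and the harmless fact that indices with $\lambda_{n,d}=0$ contribute nothing to either side. No input beyond \eqref{eq:infcomp_Hilbert} (which already packages compactness and the optimality of $A_{n,d}^{\rm opt}$) is needed.
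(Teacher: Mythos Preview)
The paper does not include its own proof of Theorem~\ref{thm:ALG_WT}; it merely attributes the result to Werschulz and Wo\'{z}niakowski and refers the reader to \cite{WW17} for a proof. Your argument is correct and essentially self-contained: the reformulation of \eqref{eq:def_WT} as a uniform sub-exponential bound $n_{\ABS}(\varepsilon,d)\le C_c\,\mathrm{e}^{c(d+\varepsilon^{-1})}$, followed by the two-sided comparison between the counting function and the exponential sum $\sum_n\mathrm{e}^{-c\lambda_{n,d}^{-1/2}}$, is the natural route and is in the same spirit as the argument the paper sketches for Theorem~\ref{thm:ALG_SPT}. The block-splitting in the WT$\Rightarrow$\eqref{eq:cond_WT} direction with the halved parameter $c/2$ to make the geometric series converge is the standard device here, and your handling of the constraint $\varepsilon\in(0,1)$ by evaluating at $1/(k+2)$ rather than $1/(k+1)$ is clean. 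I see no gaps.
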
 

For an overview of conditions that are equivalent to the various common 
tractability notions, we refer to \cite{KW19}.

\subsection{Linear tensor product problems}

A special case of the Hilbert space setting occurs when one considers so-called \textit{linear tensor product problems}. 
Indeed, consider two Hilbert spaces $\calF_1$ and $\calG_1$ and a 
compact linear solution operator,
$S_1: \calF_1 \to \calG_1$.
For $d\in \NN$, let 
$$
\calF_d\,=\,\calF_1\otimes\calF_1\otimes\cdots\otimes\calF_1\quad\mbox{and}\quad
\calG_d\,=\,\calG_1\otimes\calG_1\otimes\cdots\otimes\calG_1
$$
be the $d$-fold tensor products of the spaces $\calF_1$ and $\calG_1$, respectively. 
Furthermore, let $S_d$ be the linear tensor product operator,
$$ 
S_d=S_1\otimes S_1\otimes \cdots \otimes S_1,
$$
on $\calF_d$. 
In this way, we again obtain a sequence of compact    
linear solution operators    
$$   
\{S_d: \calF_d \to \calG_d\}_{d \in \NN}.     
$$
It is known that the eigenvalues $\lambda_{n,d}$ of $S_d^*S_d$ are then given as products 
of the eigenvalues $\tlambda_n$ of the operator $S_1^\ast S_1:\calF_1\rightarrow\calF_1$, i.e.,
\begin{equation}\label{eq:productform}
\lambda_{n,d}=\prod_{\ell=1}^d \tlambda_{n_\ell}.
\end{equation}
Without loss of generality, one can assume that the $\tlambda_n$ are ordered, i.e., $\tlambda_1\ge \tlambda_2 \ge \cdots$. 
Tractability analysis in the tensor product setting thus can be done by considering properties of the eigenvalues $\tlambda_n$ 
of the operator $S_1^\ast S_1$. Deriving necessary and sufficient conditions on the $\tlambda_n$ poses an interesting 
mathematical problem. However, from this point of view, tractability analysis for tensor product problems is not necessarily 
simpler than for the general case. Indeed, although the $\lambda_{n,d}$ are given by \eqref{eq:productform}, 
the ordering of the $\tlambda_n$ does not easily imply the
ordering of the $\lambda_{n,d}$ since the map $n \in \NN \mapsto (n_1, \ldots, n_d) \in \NN^d$ corresponding to \eqref{eq:productform}
exists but usually does not have a simple explicit form. This makes the tractability analysis challenging. Numerous results on precise 
conditions for common tractability notions in the tensor product case can be found in \cite[Chapter 5]{NW08}, but we would like to state one exemplary result, which was again first shown by Wo\'{z}niakowski in \cite{W94b}, and further elaborated in \cite[Theorem 5.5]{NW08}.
\begin{theorem}[Wo\'{z}niakowski]\label{thm:ALG_PT_tensor}
 Let $\{\calF_d\}_{d\in\NN}$ and $\{\calG_d\}_{d\in\NN}$ be Hilbert spaces, and let 
$\{S_d\colon\calF_d\to \calG_d\}_{d\in\NN}$ be compact linear operators. Consider information from $\Lambda^{\rm all}$ and 
the absolute worst case setting on the unit balls $\calB_d$ of the $\calF_d$. 

Then the following assertions hold true.
\begin{itemize}
 \item The problem is intractable if $\widetilde{\lambda}_1>1$. 
 \item If $\widetilde{\lambda}_1=\widetilde{\lambda}_2=1$, the problem is intractable. 
 \item If $\widetilde{\lambda}_1=1$ and $\widetilde{\lambda}_2 < 1$, we 
 cannot have PT, but we may have WT, depending on the decay of the 
 $\widetilde{\lambda}_n$.
 \item If $\widetilde{\lambda}_1 < 1$, then we may have WT and also PT and SPT, 
 provided that the $\widetilde{\lambda}_n$ decay sufficiently fast. 
\end{itemize}
\end{theorem}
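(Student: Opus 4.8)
The plan is to reduce everything to a counting problem and then play the product structure~\eqref{eq:productform} off against the ordering of the eigenvalues. Combining~\eqref{eq:infcomp_Hilbert} with~\eqref{eq:productform} --- the eigenvalues of $S_d^\ast S_d$, listed with multiplicity, being exactly the numbers $\tlambda_{n_1}\cdots\tlambda_{n_d}$ over all $(n_1,\dots,n_d)\in\NN^d$ --- one gets
\[
 n_{\ABS}(\varepsilon,d)=\#\Bigl\{(n_1,\dots,n_d)\in\NN^d:\ \prod_{\ell=1}^d\tlambda_{n_\ell}>\varepsilon^2\Bigr\}.
\]
Throughout I assume, as is standard for this statement, that the univariate problem is non-degenerate, i.e.\ $\tlambda_2>0$; otherwise every $S_d$ has rank one and the problem is trivially SPT. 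I will also use the identity, valid for every $\tau>0$ (both sides possibly $+\infty$),
\[
 \sum_{n=1}^\infty\lambda_{n,d}^{\tau}=\sum_{(n_1,\dots,n_d)\in\NN^d}\ \prod_{\ell=1}^d\tlambda_{n_\ell}^{\tau}=\Bigl(\sum_{m=1}^\infty\tlambda_m^{\tau}\Bigr)^{\!d},
\]
which is just the expansion of a $d$-fold product of identical non-negative series and which ties the univariate decay to conditions~\eqref{eq:cond_SPT} and~\eqref{eq:cond_PT}.

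For the two intractability claims and the failure of PT I would, for a fixed admissible $\varepsilon$, produce a large family of multi-indices with product exceeding $\varepsilon^2$, built only from the coordinates $1$ and $2$. If $\tlambda_1=\tlambda_2=1$, every $(n_1,\dots,n_d)\in\{1,2\}^d$ has product $1>\varepsilon^2$, so $n_{\ABS}(\varepsilon,d)\ge2^d$ for all $\varepsilon\in(0,1)$ and we have the curse of dimensionality, hence intractability. If $\tlambda_1>1$, I would first pick (by continuity, using $\tlambda_1>1$ and $\tlambda_2>0$) a constant $c\in(0,1/2)$ with $\tlambda_1^{1-c}\tlambda_2^{c}>1$; then any multi-index with at most $\lfloor cd\rfloor$ coordinates equal to $2$ and the rest equal to $1$ has product at least $(\tlambda_1^{1-c}\tlambda_2^{c})^{d}>1>\varepsilon^2$, and there are at least $\binom{d}{\lfloor cd\rfloor}$ of them, which is exponential in $d$ --- again the curse. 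For the ``no PT'' part of the third item, with $\tlambda_1=1>\tlambda_2>0$ I would fix $\varepsilon\in(0,1)$ and set $K(\varepsilon):=\lceil 2\ln(1/\varepsilon)/\ln(1/\tlambda_2)\rceil-1$, so that $\tlambda_2^{K(\varepsilon)}>\varepsilon^2$ and $K(\varepsilon)\to\infty$ as $\varepsilon\to0$; each multi-index with exactly $K(\varepsilon)$ coordinates equal to $2$ and the rest equal to $1$ then has product $\tlambda_2^{K(\varepsilon)}>\varepsilon^2$, whence $n_{\ABS}(\varepsilon,d)\ge\binom{d}{K(\varepsilon)}\sim d^{K(\varepsilon)}/K(\varepsilon)!$ as $d\to\infty$. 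If PT held, say $n_{\ABS}(\varepsilon,d)\le Cd^{q}\varepsilon^{-p}$, then choosing $\varepsilon$ with $K(\varepsilon)>q$ would force a polynomial of degree exceeding $q$ in $d$ to be bounded by $C\varepsilon^{-p}d^{q}$ for all $d$, a contradiction.

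For the positive assertions I would exhibit explicit univariate sequences and feed them into \RefThm{thm:ALG_SPT} and \RefThm{thm:ALG_WT}. If $\tlambda_1<1$ and the decay is fast enough that $\sum_m\tlambda_m^{\tau_0}<\infty$ for some $\tau_0>0$ (e.g.\ $\tlambda_m=2^{-m}$ or $\tlambda_m=(m+1)^{-2}$), then for $\tau\ge\tau_0$ we have $\sum_m\tlambda_m^{\tau}\le\tlambda_1^{\tau-\tau_0}\sum_m\tlambda_m^{\tau_0}\to0$ as $\tau\to\infty$, so $\sum_m\tlambda_m^{\tau_1}\le1$ for some $\tau_1$; by the identity above $\sum_{n\ge1}\lambda_{n,d}^{\tau_1}\le1$ uniformly in $d$, so~\eqref{eq:cond_SPT} holds (with $L=1$, $M\le1$) and the problem is SPT, a fortiori PT and WT. In fact, when $\tlambda_1<1$, PT and SPT coincide --- summability of some power of $(\tlambda_m)$, which is forced by PT via the $d=1$ instance of~\eqref{eq:cond_PT}, already gives~\eqref{eq:cond_SPT} --- so ``PT but not SPT'' does not occur here; but ``WT but not PT'', and failure of WT, are obtained by slowing the decay of $(\tlambda_m)$ toward and past the borderline of summability (still with $\tlambda_1<1$, respectively with $\tlambda_1=1>\tlambda_2$ in the third item), e.g.\ with logarithmic-type decay. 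Whether such a borderline instance is WT is then decided by the criterion~\eqref{eq:cond_WT}, which one checks by estimating $\sum_n\mathrm{e}^{-c\,\lambda_{n,d}^{-1/2}}$ through the product form of the $\lambda_{n,d}$.

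The main obstacle is neither the intractability nor the ``no PT'' claims, which follow from the binomial-coefficient lower bounds above, nor the SPT examples, which follow from the summation identity together with \RefThm{thm:ALG_SPT}; it is the two soft clauses ``we may have WT, depending on the decay'' in the last two items. Pinning those down requires the sharp characterisation of weak tractability for tensor product problems together with a genuinely careful choice of borderline sequences $(\tlambda_m)$ --- exactly the kind of delicate limiting behaviour already flagged by the $\mathrm{e}^{\sqrt{d}\,\sqrt{\varepsilon^{-1}}}$ example in the introduction.
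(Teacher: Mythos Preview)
The paper does not actually prove this theorem: it is stated as an ``exemplary result'' and attributed to Wo\'{z}niakowski \cite{W94b}, with a pointer to \cite[Theorem 5.5]{NW08} for further details. There is therefore no in-paper proof to compare your proposal against.

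That said, your approach is essentially the standard one used in those references. The counting reformulation of $n_{\ABS}(\varepsilon,d)$ via the product structure, the binomial-coefficient lower bounds built from indices in $\{1,2\}^d$, and the product-sum identity $\sum_n\lambda_{n,d}^\tau=(\sum_m\tlambda_m^\tau)^d$ fed into \RefThm{thm:ALG_SPT} are exactly how the intractability, ``no PT'', and SPT parts are handled in \cite{W94b} and \cite[Chapter 5]{NW08}. Your observation that PT and SPT coincide when $\tlambda_1<1$ is also correct and appears in those sources. You are right to flag the two ``may have WT'' clauses as the delicate part: in the original treatment these are resolved not via \RefThm{thm:ALG_WT} directly but through a dedicated analysis of the tensor-product eigenvalue counts (essentially sharp asymptotics for $\log n_{\ABS}(\varepsilon,d)$ in terms of the decay of $(\tlambda_m)$), and constructing the borderline examples does require some care. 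Your proposal is a sound sketch; to complete it you would need to supply those explicit WT/non-WT examples, for which the cited references give the necessary machinery.
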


\subsection{The average case setting}

At this point let us move away from tensor product problems again, and let us make a short detour to the average case setting. As pointed out in the introduction, 
in this case we assume that the spaces $\{\calF_d\}_{d\in\NN}$ are equipped with zero-mean Gaussian measures 
$\mu_d$, and we let $\nu_d:= \mu_d S_d^{-1}$ be the corresponding zero-mean Gaussian measures on the spaces 
of solution elements $S_d (f)$ for $d\in\NN$. Then, one considers the so-called \textit{correlation operators}
$C_{\nu_d}$ and its eigenvalues, which we denote by $\lambda_{n,d}^{\rm avg}$, and of which we assume that 
they are ordered, $\lambda_{1,d}^{\rm avg}\ge\lambda_{2,d}^{\rm avg}\ge \cdots \ge 0$. It is known that we then have, for the $n$-th minimal 
average case error $e_n^{\rm avg} (S_d)$,
\[
 \err_n^{\rm avg} (S_d)=\left(\sum_{k=n+1}^\infty \lambda_{k,d}^{\rm avg}\right)^{1/2},
\]
and
\[
 n_{\ABS} (\varepsilon, S_d)=\min\left\{n\colon \sum_{k=n+1}^\infty \lambda_{k,d}^{\rm avg} \le \varepsilon^2\right\}.
\]
If one would like to consider the normalized setting instead of the absolute setting, this can be done analogously by noting 
that the initial average case error, for any $d\in\NN$, equals the square root of the (finite) trace of the covariance operator $C_{\nu_d}$,
\[
 \err_0^{\rm avg} (S_d)=\left(\sum_{k=1}^\infty \lambda_{k,d}^{\rm avg}\right)^{1/2}.
\]
Due to these representations, it is possible to find necessary and sufficient conditions on tractability by considering 
the tail sums $\sum_{k=n+1}^\infty \lambda_{k,d}^{\rm avg}$ in the average case setting, instead of considering the $\lambda_{n,d}$ in 
the worst case setting. 

We refer to \cite[Chapter 6]{NW08} for results of this kind, and also to \cite[Appendix B]{NW08} for technical background 
on Gaussian measures and correlation operators. We also remark that such results for the average case setting can be shown 
without having to assume that the spaces $\{\calF_d\}_{d\in\NN}$ are Hilbert spaces, but it is sufficient to assume 
that they are separable Banach spaces; however, the $\{\calG_d\}_{d\in\NN}$ are still assumed to be Hilbert spaces. 

Analyzing the eigenvalues is a very common way of dealing with tractability of linear problems on Hilbert spaces. Alternatively, one may 
work with the concepts of the diameter and/or radius of information 
(see, e.g., \cite[Chapter 4]{NW08}), which are rather technical but very powerful tools in this context. Furthermore, one can also derive interesting results by relating a problem for $\Lambda^{\rm all}$ to the analogous 
problem for $\Lambda^{\rm std}$, where we would like to mention again the papers \cite{DKU23, KPUU23, KU20, KU21, NSU22} as recent examples. 

\section{Example: problems on Korobov spaces, Part 2}\label{sec:example_2}

Let us return to the example of the Korobov spaces defined in Section~\ref{sec:example}, and show some results for the worst case setting.

\medskip

First, we study $L_2$-approximation, i.e., $S_d:\calH_{\kor,d,\alpha}\to L_2 ([0,1]^d)$, $S_d (f)=\EMB_d (f)=f$. 
Let us allow $\Lambda^{\rm all}$ as the information class. Then, we need to identify the 
eigenvalues of the self-adjoint operator $\EMB_d^* \EMB_d$, which is (see, e.g., \cite{NW08}) given by
\[
 (\EMB_d^* \EMB_d (f))(\bsx)=\sum_{\bsh\in\ZZ^d} r_{d,2\alpha}(\bsh)\, \langle f,\eta_{\bsh}\rangle_{\kor,d,\alpha}\, \eta_{\bsh} (\bsx), 
 \quad \mbox{for $\bsx\in [0,1]^d$,}
\]
where the $\eta_{\bsh}$ are given in \eqref{eq:onb_korobov}. Then, the eigenpairs of $\EMB_d^* \EMB_d$ 
are $((r_{d,2\alpha} (\bsh))^{-1},\eta_{\bsh})$, since
\[
 \EMB_d^*\, \EMB_d (\eta_{\bsh})=(r_{d,2\alpha} (\bsh))^{-1}\,\eta_{\bsh}.
\]
Hence, for this concrete problem, we have
\[
 \left\{\lambda_{n,d}\colon n\in\NN\right\}=\left\{(r_{d,2\alpha} (\bsh))^{-1}\colon \bsh\in\ZZ^d\right\}.
\]
Note that it is not straightforward to order the values of the $(r_{d,2\alpha} (\bsh))^{-1}$, but since the results in Theorems \ref{thm:ALG_SPT}--\ref{thm:ALG_WT} are formulated in terms of infinite sums of the eigenvalues, a precise ordering is not needed here. Indeed, we are going to show next that
none of the conditions in these theorems is met, so the problem is not even weakly tractable. To this end, 
consider the condition \eqref{eq:cond_WT} for arbitrary $d\in\NN$ and for the special choice $c=c_0=1/2$. 
Then we have
\begin{eqnarray*}
 \sum_{n=1}^{\infty} \mathrm{e}^{-c_0\, \lambda_{n,d}^{-1/2}}&=& 
 \sum_{\bsh\in\ZZ^d} \mathrm{e}^{-c_0\, r_{d,\alpha}(\bsh)}\\
 &=& \left(\sum_{h\in\ZZ} \mathrm{e}^{-c_0\, r_{\alpha}(h)}\right)^d\\
 &=& \left(1+ 2\sum_{h=1}^\infty \mathrm{e}^{-c_0\, h^{\alpha}}\right)^d\\
 &\ge &\left(1+ 2 \mathrm{e}^{-c_0}\right)^d.
\end{eqnarray*}
Consequently,
\[
\mathrm{e}^{-c_0\, d} \sum_{n=1}^{\infty} \mathrm{e}^{-c_0\, \lambda_{n,d}^{-1/2}}
\ge  \mathrm{e}^{d \left(-c_0 + \log (1+2 \mathrm{e}^{-c_0})\right)}. 
\]
However, it is easily checked that for the special choice $c_0=1/2$ we have 
$\left(-c_0 + \log (1+2 \mathrm{e}^{-c_0})\right)>0$, and this implies that 
the condition \eqref{eq:cond_WT} is not satisfied for $c=1/2$. Hence, we cannot have WT, so the problem 
of $L_2$-approximation on $\calH_{\kor,d,\alpha}$ is intractable.

There is also a second, actually shorter, way to see that $L_2$-approximation on $\calH_{\kor,d,\alpha}$ is intractable, 
namely by considering it as a tensor product problem. Indeed, recall from \eqref{eq:kor_tensor} that $\calH_{\kor,d,\alpha}$ is the $d$-fold tensor product of the spaces 
$\calH_{\kor,1,\alpha}$. Accordingly, following what is outlined above 
for linear tensor product problems on Hilbert spaces, we can analyze the eigenvalues $\widetilde{\lambda}_n$ of the operator $S_1=\EMB_1$. 
In analogy to the eigenvalues for the $d$-variate problem, these are given by the set of the $(r_{1,2\alpha} (h))^{-1}$
for $h\in\ZZ$. It follows from the definition of $r_{1,2\alpha}$ that the largest eigenvalues are $r_{1,2\alpha}(0)$, 
$r_{1,2\alpha}(1)$, and $r_{1,2\alpha}(-1)$, which all equal $1$. Hence it follows by the second point of Theorem \ref{thm:ALG_PT_tensor} 
that the problem is intractable.

\medskip

What happens if we study, instead of $L_2$-approximation, the problem of numerical integration on $\calH_{\kor,d,\alpha}$, i.e., 
if we choose $\calG_d=\RR$ and $S_d =\INT_d$ for $d\in\NN$? As pointed out above, in this case it makes sense to restrict ourselves 
to standard information, i.e., $\Lambda=\Lambda^{\rm std}$. We have pointed out in Section \ref{sec:example} that integration 
on $\calH_{\kor,d,\alpha}$ is not harder than $L_2$-approximation, so one might hope that maybe the integration problem could 
satisfy some sort of tractability. However, it is known (see, e.g., \cite{HW01, SW01} and also \cite[Chapter 16]{NW10}) 
that also numerical integration on $\calH_{\kor,d,\alpha}$ is intractable. 

\medskip

The situation that both the $L_2$-approximation problem and the numerical integration problem on $\calH_{\kor,d,\alpha}$ are intractable 
is unsatisfying. However, by modifying the spaces $\calH_{\kor,d,\alpha}$ to so-called \textit{weighted function spaces}, 
also positive results can be obtained. The idea of studying weighted function spaces goes back to the seminal paper \cite{SW98} of 
Sloan and Wo\'{zniakowski}. The motivation for weighted spaces is that in many applications different coordinates or different groups of 
coordinates may have different influence on a multivariate problem. To give a simple example, consider a function $f:[0,1]^d\to\RR$, where 
\[
 f(x_1,\ldots,x_d)=\mathrm{e}^{x_1}+ \frac{x_2 + \cdots +x_d}{2^d}.
\]
Clearly, for large $d$, the first variable has much more influence on this problem than the others. 
In order to make such observations more precise, one introduces weights, which are nonnegative real numbers 
$\gamma_{d,\fraku}$, one for each set $\fraku \subseteq \{1,\ldots,d\}$. Intuitively speaking, the number $\gamma_{d,\fraku}$ models the influence 
of the variables with indices in $\fraku$. Larger values of $\gamma_{\fraku}$ mean more influence, smaller values less influence. Formally, we
set $\gamma_{d,\emptyset}=1$, and we write $\bsgamma_d=\{\gamma_{d,\fraku}\}_{\fraku\subseteq \{1,\ldots,d\}}$. These weights can now be used 
to modify the norm in a given function space, thereby modifying the unit ball over which the worst case error of a problem is considered. By making 
the unit ball smaller according to the weights (in the sense that also here certain groups of variables may have less influence than others) a problem 
may thus become tractable, provided that suitable conditions
on the weights hold. This effect also corresponds to intuition---if a problem depends
on many variables, of which only some have significant influence, it is natural to
expect that the problem will be easier to solve than one where all variables have the
same influence.

For the mathematically precise definition of the weighted Korobov space, we restrict ourselves to the situation where all weights 
$\bsgamma_d=\{\gamma_{d,\fraku}\}_{\fraku\subseteq \{1,\ldots,d\}}$, 
are strictly positive. The more general case in which zero weights are allowed is dealt with by making suitable technical adaptions, 
which is possible but slightly tedious. 
Furthermore, for $\bsh=(h_1,\ldots,h_d)\in\ZZ^d$, we put
\[
 \fraku (\bsh):= \{j\in\{1,\ldots,d\}\colon h_j \neq 0\},
\]
and modify the function $r_{d, 2\alpha}$ to 
\[
 r_{d,2\alpha,\bsgamma_d} (\bsh):=\frac{1}{\gamma_{d,\fraku (\bsh)}} \prod_{j\in\fraku (\bsh)} \abs{h_j}^{2\alpha}\quad 
 \mbox{for $\bsh=(h_1,\ldots,h_d)\in\ZZ^d$,}
\]
where we define the empty product to equal 1 if $\bsh=\bszero$.

For $\alpha>1/2$ we define the Hilbert space $\calH_{\kor,d,\alpha,\bsgamma_d}$ 
as the space of all one-periodic functions $f$ with absolutely convergent Fourier series, 
and with finite norm $\norm{f}_{\kor,d,\alpha,\bsgamma_d}:=\langle f, f\rangle_{\kor,d,\alpha,\bsgamma_d}^{1/2}$
where the inner product is given by
\[
 \langle f, g\rangle_{\kor,d,\alpha,\bsgamma_d}:=
 \sum_{\bsh\in\ZZ^d} r_{d,2\alpha,\bsgamma_d} (\bsh) \widehat{f} (\bsh) \overline{\widehat{g}(\bsh)}.
\]
Hence, the inner product and norm in $\calH_{\kor,d,\alpha,\bsgamma_d}$ are the weighted analogues 
of the inner product and norm in $\calH_{\kor,d,\alpha}$. Note that it is necessary to have the index $d$ in the notation of 
the weights $\bsgamma_d$, as in general the set of weights may be different for different choices of $d$.  

\medskip

The space $\calH_{\kor,d,\alpha,\bsgamma}$ is a subspace of $L_2 ([0,1]^d)$, so it makes sense to study 
again $L_2$-approximation for elements of the weighted Korobov space. That is, we then formally consider 
the sequence of weighted operators $S_{d,\bsgamma_d}: \calH_{\kor,d,\alpha,\bsgamma_d}\to L_2 ([0,1]^d)$, 
$d\in\NN$, where $S_{d,\bsgamma_d}(f)=\EMB_{d,\bsgamma_d} (f)=f$. Again, the information complexity can be characterized by the eigenvalues of 
the self-adjoint operators $S_{d,\bsgamma_d}^* S_{d,\bsgamma_d}$. One may ask why it is necessary to 
define the weighted operator $S_{d,\bsgamma_d}$ when it actually coincides with the corresponding embedding 
operator for the unweighted problem; the reason for this formal distinction is that, 
as the norm in $\calH_{\kor,d,\alpha,\bsgamma_d}$ is not the same as in $\calH_{\kor,d,\alpha}$, also the 
eigenvalues of the self-adjoint operators change. 

As shown in \cite{NW08},
the eigenvalues of $S_{d,\bsgamma_d}^* S_{d,\bsgamma_d}$, let us again call them $\lambda_{n,d}$, are 
\[
 \left\{\lambda_{n,d}\colon n\in\NN\right\}=
 \left\{(r_{d,2\alpha,\bsgamma_d} (\bsh))^{-1}\colon \bsh\in\ZZ^d\right\}.
\]
Using these eigenvalues, we can again employ Theorems \ref{thm:ALG_SPT}--\ref{thm:ALG_WT} to determine whether or not certain tractability notions hold for the weighted problem, and this heavily depends on the behavior of the weights. 

Let us illustrate this by an example, where we choose the weights 
as \textit{product weights}. In this case, we have a non-increasing sequence of positive reals, $\gamma_1\ge \gamma_2\ge \cdots >0$, and put
\[
 \gamma_{d,\fraku}=\gamma_{\fraku}:=\prod_{j\in\fraku} \gamma_j,
\]
for any $d\in\NN$ and any $\fraku\subseteq \{1,\ldots,d\}$, where we put $\gamma_{d,\emptyset}=\gamma_{\emptyset}=1$. Note that 
it is justified to neglect the index $d$ in the weights $\bsgamma_{\fraku}$, 
since the sequence of the $\gamma_j$ is chosen independently of $d$. 
For these product weights we get 
\[
 \left\{\lambda_{n,d}\colon n\in\NN\right\}=
 \left\{\prod_{j\in\fraku (\bsh)} \frac{\gamma_j}{\abs{h_j}^{2\alpha}}\colon \bsh\in\ZZ^d\right\}.
\]
Now consider the condition \eqref{eq:cond_SPT}. For $d\in\NN$, $\tau>0$, and $L\in\NN$ we obtain
\begin{eqnarray*}
 \sum_{n=L}^\infty \lambda_{n,d}^\tau &\le& \sum_{n=1}^\infty \lambda_{n,d}^\tau \\
 &=&  \sum_{\bsh\in\ZZ^d} \prod_{j\in\fraku (\bsh)} \frac{\gamma_j^\tau}{\abs{h_j}^{2\alpha\tau}}\\
 &=& \prod_{j=1}^d \left(1 + \sum_{h_j \in\ZZ\setminus \{0\}} 
 \frac{\gamma_j^\tau}{\abs{h_j}^{2\alpha\tau}}\right).
\end{eqnarray*}
If $\tau> 1/(2\alpha)$, we write $\zeta (\cdot)$ to denote the Riemann 
zeta function and obtain
\[
\sum_{n=L}^\infty \lambda_{n,d,\bsgamma_d}^\tau \le
  \prod_{j=1}^d \left(1 + 2\gamma_j^\tau \zeta (2\alpha\tau)\right)\\
  \le \prod_{j=1}^\infty \left(1 + 2\gamma_j^\tau \zeta (2\alpha\tau)\right).
\]
Now we can estimate 
\begin{eqnarray*}
 \prod_{j=1}^\infty \left(1 + 2\gamma_j^\tau \zeta (2\alpha\tau)\right)
 &=& \exp \left[ \sum_{j=1}^\infty \log  \left(1 + 2\gamma_j^\tau \zeta (2\alpha\tau)\right) \right]\\ 
 &\le & 
 \exp \left[\sum_{j=1}^\infty 2\gamma_j^\tau \zeta (2\alpha\tau) \right],
\end{eqnarray*}
where we used that $\log (1+x)\le x$ for $x\ge 0$. 
This shows that summability of the $\gamma_j^\tau$ for $\tau > 1/(2\alpha)$ is a sufficient condition for strong polynomial tractability. 

Indeed, as outlined in \cite[Section 5.3]{NW08}, strong polynomial tractability in this example is equivalent to
\[
 p_{\bsgamma}:= \inf \left\{p\ge 0\colon \sum_{j=1}^\infty \gamma_j^p < \infty\right\}<\infty.
\]
Furthermore, the exponent of SPT is then given by $2\max  \{1/(2\alpha),p_{\bsgamma}\}$. As pointed out in \cite{WW99} and also \cite{NW08}, for the product weights considered here, we even have equivalence of SPT and PT. Furthermore, it is known that WT holds if and 
only if $\inf_{j\ge 1} \gamma_j < 1$. We refer to \cite{EP21} for an overview 
of results corresponding to the present example; for an overview also 
containing results on $L_\infty$-approximation, we refer to \cite{EKP22} and the references therein. 
Moreover, we remark 
that for product weights also the weighted Korobov space has a tensor product structure; indeed, it is the tensor product of the spaces
$\calH_{\kor,1,\alpha,\gamma_j}$, $j\in\{1,\ldots,d\}$. Thus, one could also study tractability in weighted spaces using this structure, which, 
however, we do not need in the present paper.

For results on function approximation in Korobov spaces using information from $\Lambda^{\rm std}$ instead of
$\Lambda^{\rm all}$, we exemplary refer to \cite{DKP22, KSW06, NSW04} and the references therein.

The subject of numerical integration in weighted Korobov spaces has been addressed in a huge number of papers and books. 
The analysis of this question is obviously different from the problem of function approximation, in particular as 
one is limited to information from $\Lambda^{\rm std}$. Generally speaking, also in the case of numerical integration, 
suitably fast decaying weights can yield tractability. Positive results are often obtained by considering so-called \textit{lattice rules} 
as concrete integration rules. We refer to \cite{DKP22}, \cite{DKS13}, and \cite{NW10} for overviews on 
this subject.

\section{Exponential tractability}\label{sec:exp_tractability}

Up to now, we have considered different notions of 
tractability that are defined in terms of a relation between $n_{\CRI}(\varepsilon,S_d)$ and some powers of $d$ and
$\varepsilon^{-1}$. This is what is nowadays sometimes called algebraic (ALG) tractability, and we will also use this term for the rest of the paper. On the other hand, a relatively recent stream of work defines different notions
of tractability in terms of a relation between $n_\CRI (\varepsilon,S_d)$ and some powers of $d$ and $1+\log \varepsilon^{-1}$. Then, the complexity of the problem increases only logarithmically as the error tolerance vanishes. This situation is referred to as exponential
(EXP) tractability, which is the subject of the present section.

Whereas algebraic tractability is usually obtained, e.g., for dealing with classes of functions 
with finite smoothness, a typical setting where we consider exponential tractability is when we deal with 
classes of functions which are 
at least $C^\infty$ or analytic
functions. Indeed, the study of multivariate problems for spaces of infinitely smooth functions motivated 
the introduction of exponential tractability.
Motivating examples with positive exponential
tractability results may be found for specific spaces 
in the papers \cite{CW17, DLPW11, IKLP15, KPW14, KPW17, KMU16, KSU15, SW18} and the references therein. 

If we consider exponential tractability, we can transfer all algebraic tractability notions to the exponential case; we illustrate this by the following example. In \eqref{eq:def_PT}, we defined algebraic polynomial tractability (ALG-PT). Accordingly, we now say that we have \textit{exponential polynomial tractability (EXP-PT)} in the setting $\CRI \in \{\ABS, \NOR\}$  if there exist absolute constants $C,p,q\ge 0$ 
such that
\begin{equation}\label{eq:def_EXP_PT}
n_{\CRI} (\varepsilon, S_d) \le C\, d^q\, \left((1+\log (\varepsilon^{-1})\right)^{-p}\quad \forall \varepsilon\in (0,1), \forall d\in\NN.
\end{equation}
If \eqref{eq:def_EXP_PT} even holds for $q=0$, we speak of \textit{exponential strong polynomial tractability (EXP-SPT)}. If EXP-SPT holds, i.e., if \eqref{eq:def_EXP_PT} holds with $q=0$, then the infimum of the $p$ for which this is the case is called the \textit{exponent of EXP-SPT}.

All other tractability notions, as for instance exponential weak tractability or exponential quasi-polynomial tractability, can be defined analogously. 

We also remark that exponential weak tractability (EXP-WT) is a notion of tractability that expresses the information complexity with respect to the bits of 
accuracy in an approximation problem, as pointed out in \cite{PP14, PPW17}. 

Now, let us again assume that
$\{\calF_d\}_{d\in\NN}$ and $\{\calG_d\}_{d\in\NN}$ are Hilbert spaces, and 
that the $\{S_d : \calF_d \to \calG_d\}_{d \in \NN}$ are
compact linear solution operators. Then, \eqref{eq:infcomp_Hilbert} and \eqref{eq:infcomp_Hilbert_NOR} still apply, but the conditions on the decay 
of the $\lambda_{n,d}$ will change when one switches from algebraic to exponential tractability. 

We exemplary state two theorems, the proofs of which can be found in \cite{KW19}.
\begin{theorem}[Kritzer, Wo\'{z}niakowski]\label{thm:EXP_SPT}
 Let $\{\calF_d\}_{d\in\NN}$ and $\{\calG_d\}_{d\in\NN}$ be Hilbert spaces, and let 
$\{S_d\colon\calF_d\to \calG_d\}_{d\in\NN}$ be compact linear operators. Consider information from $\Lambda^{\rm all}$ and 
the absolute worst case setting on the unit balls $\calB_d$ of the $\calF_d$. 

Then we have EXP-SPT if and only if there exists a constant $\tau>0$ and a constant $L\in\NN$ such that
\begin{equation}\label{eq:cond_EXP_SPT}
 \sup_{d\in\NN} \left(\sum_{n=L}^\infty \lambda_{n,d}^{n^{-\tau}} \right)^{1/\tau} <\infty. 
\end{equation}
The exponent of EXP-SPT is given by
\[
 p^*:=\inf\{1/\tau\colon \tau \ \mbox{satisfies \eqref{eq:cond_EXP_SPT}}\}.
\]
\end{theorem}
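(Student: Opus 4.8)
The plan is to follow the pattern of the proof of Theorem \ref{thm:ALG_SPT}, but with the polynomial scale $\varepsilon^{-p}$ replaced everywhere by the logarithmic scale $(1+\log\varepsilon^{-1})^p$, and with the quantity $\lambda_{n,d}^{n^{-\tau}}$ playing the role that $\lambda_{n,d}^\tau$ played there. By \eqref{eq:infcomp_Hilbert}, in this setting the information complexity is determined entirely by how quickly $\lambda_{n,d}$ falls below $\varepsilon^2$, so the whole argument amounts to translating, in both directions, between decay rates of the singular values and growth rates of $n_{\ABS}(\varepsilon,d)$, and then checking that the translated conditions match \eqref{eq:cond_EXP_SPT}.

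For necessity, assume EXP-SPT with exponent $p^*$ and fix $p=p^*+\delta$, so $n_{\ABS}(\varepsilon,d)\le C(1+\log\varepsilon^{-1})^p$ for all $\varepsilon\in(0,1)$, $d\in\NN$. By \eqref{eq:infcomp_Hilbert} and monotonicity of the eigenvalues, $\lambda_{\lfloor C(1+\log\varepsilon^{-1})^p\rfloor+1,d}\le\varepsilon^2$; solving $n-1\le C(1+\log\varepsilon^{-1})^p$ for $\varepsilon$ and letting $\varepsilon$ vary so that $n$ runs through all large integers yields the $d$-independent bound $\lambda_{n,d}\le\exp(2-2((n-1)/C)^{1/p})$ for all sufficiently large $n$. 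Substituting this into $\lambda_{n,d}^{n^{-\tau}}$ gives an upper estimate $\exp(2n^{-\tau}-2n^{-\tau}((n-1)/C)^{1/p})$, whose exponent behaves like a negative constant times $n^{1/p-\tau}$; hence whenever $\tau<1/p$ the series $\sum_{n=L}^\infty\lambda_{n,d}^{n^{-\tau}}$ converges, uniformly in $d$, for $L$ a suitable absolute constant. Since $\delta>0$ is arbitrary, \eqref{eq:cond_EXP_SPT} holds for every $\tau<1/p^*$, so $\inf\{1/\tau:\tau\text{ satisfies }\eqref{eq:cond_EXP_SPT}\}\le p^*$.

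For sufficiency, assume \eqref{eq:cond_EXP_SPT} and put $M_\tau:=\sup_{d\in\NN}\sum_{n=L}^\infty\lambda_{n,d}^{n^{-\tau}}<\infty$. First, if $\lambda_{m,d}\ge 1$ for some $m\ge L$, then $m-L+1\le\sum_{n=L}^m\lambda_{n,d}^{n^{-\tau}}\le M_\tau$, so there is an absolute $L'$ with $\lambda_{n,d}<1$ for all $n\ge L'$ and all $d$. Now comes the one step that is genuinely new compared with the algebraic case: because the exponent $n^{-\tau}$ varies with $n$, one cannot pull a factor $(n-L+1)^{1/\tau}$ out of the tail sum as in Theorem \ref{thm:ALG_SPT}. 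Instead, restrict attention to the block of indices $i\in\{\lceil n/2\rceil,\dots,n\}$ (of which there are at least $n/2$): there each exponent $i^{-\tau}$ lies between $n^{-\tau}$ and $2^\tau n^{-\tau}$, and using $\lambda_{n,d}\le\lambda_{i,d}<1$ together with the fact that $x\mapsto x^a$ is decreasing for $x\in(0,1)$, one obtains $\tfrac{n}{2}\,\lambda_{n,d}^{(2/n)^\tau}\le M_\tau$, that is, $\lambda_{n,d}\le(2M_\tau/n)^{(n/2)^\tau}$ for all large $n$. Taking logarithms, $\lambda_{n,d}\le\varepsilon^2$ holds as soon as $(n/2)^\tau\log(n/(2M_\tau))\ge 2\log\varepsilon^{-1}$; since $\log(n/(2M_\tau))\ge 1$ for $n$ past an absolute threshold, it suffices that $(n/2)^\tau\ge 2\log\varepsilon^{-1}$, i.e. $n\ge c\,(\log\varepsilon^{-1})^{1/\tau}$ with $c=2^{1+1/\tau}$. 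Hence $n_{\ABS}(\varepsilon,d)\le C_0+c\,(\log\varepsilon^{-1})^{1/\tau}\le C(1+\log\varepsilon^{-1})^{1/\tau}$, which is EXP-SPT with exponent at most $1/\tau$.

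Combining the two parts gives the equivalence, and also $p^*\le\inf\{1/\tau:\tau\text{ satisfies }\eqref{eq:cond_EXP_SPT}\}$ (from sufficiency, applied with $\tau$ arbitrary but admissible), which together with the reverse inequality from the necessity part proves the formula for the exponent. The main obstacle is precisely this block argument in the sufficiency direction: one has to replace the clean monotonicity step of the algebraic proof by a comparison over a geometric window of indices, controlling simultaneously that the exponents $i^{-\tau}$ stay within a bounded factor of $n^{-\tau}$ and that $\lambda_{i,d}<1$ uniformly in $d$, so that the resulting bound $\lambda_{n,d}\le(2M_\tau/n)^{(n/2)^\tau}$ carries the correct power of $n$ in its exponent. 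Everything else --- the $\varepsilon\leftrightarrow n$ conversions, absorbing additive constants into $(1+\log\varepsilon^{-1})^{1/\tau}$, and the trivial case $p^*=0$ (where finitely many eigenvalues are nonzero and \eqref{eq:cond_EXP_SPT} holds for every $\tau$ with $L$ large) --- is bookkeeping of the same nature as in Theorem \ref{thm:ALG_SPT}.
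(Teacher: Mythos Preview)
The paper itself does not prove this theorem; it only states it and refers to \cite{KW19} for the proof. So there is no in-paper argument to compare against, and your proposal should be judged on its own merits.

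Your argument is correct and is in fact the natural adaptation of the proof sketch of Theorem~\ref{thm:ALG_SPT}. The necessity direction is routine: from $n_{\ABS}(\varepsilon,d)\le C(1+\log\varepsilon^{-1})^p$ one inverts to get $\lambda_{n,d}\le\exp\bigl(2-2((n-1)/C)^{1/p}\bigr)$ for all large $n$, uniformly in $d$, and then $\sum_{n\ge L}\lambda_{n,d}^{n^{-\tau}}$ converges whenever $\tau<1/p$. The sufficiency direction is where the real content lies, and your block argument over $i\in\{\lceil n/2\rceil,\dots,n\}$ is exactly the right replacement for the simple monotonicity step of the algebraic case: it gives $\tfrac{n}{2}\,\lambda_{n,d}^{(2/n)^\tau}\le M_\tau$, hence $\lambda_{n,d}\le(2M_\tau/n)^{(n/2)^\tau}$, from which the bound $n_{\ABS}(\varepsilon,d)\le C(1+\log\varepsilon^{-1})^{1/\tau}$ follows.

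Two small points of wording. First, in the block step you write ``$x\mapsto x^a$ is decreasing for $x\in(0,1)$''; what you use (and what is true) is that $a\mapsto x^a$ is decreasing for fixed $x\in(0,1)$, together with the fact that $x\mapsto x^a$ is \emph{increasing} for $a>0$. Concretely, $\lambda_{i,d}^{i^{-\tau}}\ge \lambda_{i,d}^{(2/n)^\tau}\ge \lambda_{n,d}^{(2/n)^\tau}$, the first inequality using $i^{-\tau}\le(2/n)^\tau$ and $\lambda_{i,d}<1$, the second using $\lambda_{i,d}\ge\lambda_{n,d}$. Your conclusion is correct; only the verbal justification is swapped. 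Second, your parenthetical on $p^*=0$ is slightly off: $p^*=0$ as an infimum does not force finitely many nonzero eigenvalues. But this case needs no special treatment --- your necessity argument already shows that if $p^*=0$ then \eqref{eq:cond_EXP_SPT} holds for every $\tau>0$ (choose $p>0$ with $\tau<1/p$), so $\inf\{1/\tau\}=0$ as required.
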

\begin{theorem}[Kritzer, Wo\'{z}niakowski]\label{thm:EXP_PT}
 Let $\{\calF_d\}_{d\in\NN}$ and $\{\calG_d\}_{d\in\NN}$ be Hilbert spaces, and let 
$\{S_d\colon\calF_d\to \calG_d\}_{d\in\NN}$ be compact linear operators. Consider information from $\Lambda^{\rm all}$ and 
the absolute worst case setting on the unit balls $\calB_d$ of the $\calF_d$. 

Then we have EXP-PT if and only if there exist constants $\tau_1,\tau_2\ge 0$ and $\tau_3,H>0$ such that
\begin{equation}\label{eq:cond_EXP_PT}
 \sup_{d\in\NN}\, d^{-\tau_1}\, \left(\sum_{n=\lceil H d^{\tau_2}\rceil}^\infty \lambda_{n,d}^{n^{-\tau_3}} \right)^{1/\tau_3} <\infty. 
\end{equation}
\end{theorem}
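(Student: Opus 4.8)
The plan is to run the argument of \RefThm{thm:ALG_PT} with two systematic substitutions: the fixed power $\tau_3$ applied to the eigenvalues becomes the $n$-dependent power $n^{-\tau_3}$, and the algebraic inversion linking $\varepsilon^{-p}$ to powers of $\lambda_{n,d}$ becomes the exponential inversion linking $(1+\log\varepsilon^{-1})^{p}$ to $\lambda_{n,d}$. The only bridge between complexity and eigenvalues is again \eqref{eq:infcomp_Hilbert}, which I would use in the form $n_{\ABS}(\varepsilon,d)=\#\{n\in\NN\colon\lambda_{n,d}>\varepsilon^2\}$; in particular $\lambda_{n,d}\le\varepsilon^2$ whenever $n>n_{\ABS}(\varepsilon,d)$. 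The two implications are proved separately, and \RefThm{thm:EXP_SPT} (including its exponent formula) drops out by setting $\tau_1=\tau_2=0$, exactly as \RefThm{thm:ALG_SPT} is the specialisation of \RefThm{thm:ALG_PT}.

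For necessity, assume EXP-PT, i.e.\ $n_{\ABS}(\varepsilon,d)\le C\,d^{q}\,(1+\log\varepsilon^{-1})^{p}$ for all $\varepsilon\in(0,1)$ and $d\in\NN$, with $C\ge1$ and $p,q\ge0$ (the case $p=0$, where each $S_d$ has finite rank, is immediate). For $n\ge\lceil 2Cd^{q}\rceil+1$ I would choose $\varepsilon=\varepsilon(n,d):=\exp(1-((n-1)/(Cd^{q}))^{1/p})\in(0,1)$; then $n_{\ABS}(\varepsilon(n,d),d)\le n-1<n$, so by \eqref{eq:infcomp_Hilbert}
\[
 \lambda_{n,d}\ \le\ \varepsilon(n,d)^2\ =\ \mathrm{e}^{2}\exp\!\left(-2\left(\tfrac{n-1}{Cd^{q}}\right)^{1/p}\right).
\]
Raising this to the power $n^{-\tau_3}$ for any fixed $\tau_3\in(0,1/p)$ and writing $\beta:=1/p-\tau_3>0$ gives $\lambda_{n,d}^{n^{-\tau_3}}\le\mathrm{e}^{2}\exp(-\kappa_d n^{\beta})$ with $\kappa_d$ a constant multiple of $d^{-q/p}$. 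Comparing $\sum_n\exp(-\kappa_d n^{\beta})$ with $\int_0^\infty\exp(-\kappa_d x^{\beta})\rd x=\Gamma(1+1/\beta)\,\kappa_d^{-1/\beta}$ shows the tail sum is $O(d^{q/(p\beta)})$ uniformly in $d$; hence \eqref{eq:cond_EXP_PT} holds with this $\tau_3$, with $\tau_2=q$ and $H$ so large that $\lceil Hd^{\tau_2}\rceil\ge\lceil 2Cd^{q}\rceil+1$, and with $\tau_1=q/(p\beta\tau_3)$. Letting $\tau_3\uparrow1/p$ and $p\downarrow p^{*}$ also gives the exponent statement of \RefThm{thm:EXP_SPT}.

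For sufficiency, assume \eqref{eq:cond_EXP_PT}, and set $B_d:=M^{\tau_3}d^{\tau_1\tau_3}$ and $m_d:=\lceil Hd^{\tau_2}\rceil$, so $\sum_{n\ge m_d}\lambda_{n,d}^{n^{-\tau_3}}\le B_d$. First I would bound $N_d:=\#\{n\colon\lambda_{n,d}>1\}$: each such index $n\ge m_d$ contributes a term exceeding $1$, so $N_d\le m_d+B_d$, a polynomial in $d$. Next, for $n\ge 2\max(m_d,N_d)$ every $k\in[\lceil n/2\rceil,n]$ satisfies $k\ge m_d$ and $\lambda_{k,d}\le1$, and since $t\mapsto t^{k^{-\tau_3}}$ is increasing on $[0,\infty)$ while $a\mapsto(\text{base})^{a}$ is non-increasing for a base in $(0,1]$ and $0<k^{-\tau_3}\le(n/2)^{-\tau_3}$, one gets $\lambda_{k,d}^{k^{-\tau_3}}\ge\lambda_{n,d}^{(n/2)^{-\tau_3}}$; summing the at least $n/2$ such terms yields $(n/2)\,\lambda_{n,d}^{(n/2)^{-\tau_3}}\le B_d$, i.e.\ $\lambda_{n,d}\le(2B_d/n)^{(n/2)^{\tau_3}}$. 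Finally $\lambda_{n,d}\le\varepsilon^2$ is guaranteed once $n\ge 2\mathrm{e}B_d$ (so $\log(n/(2B_d))\ge1$) and $n\ge 2(2\log\varepsilon^{-1})^{1/\tau_3}$ (so $(n/2)^{\tau_3}\log(n/(2B_d))\ge2\log\varepsilon^{-1}$); combining these thresholds with the polynomial-in-$d$ quantities $m_d,N_d,B_d$, and using $a+b\le ab$ for $a,b\ge2$, produces $n_{\ABS}(\varepsilon,d)\le C'd^{q'}(1+\log\varepsilon^{-1})^{1/\tau_3}$ with $q'=\max(\tau_2,\tau_1\tau_3)$, which is EXP-PT.

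I expect the only genuine obstacle — and the point where this differs from \RefThm{thm:ALG_PT} — to be the non-monotonicity of $n\mapsto\lambda_{n,d}^{n^{-\tau_3}}$: a base in $(0,1)$ raised to a shrinking power increases, so one cannot read $\lambda_{n,d}$ off the tail sum by the one-line argument available in the algebraic case. This forces the two preliminary steps in the sufficiency direction — controlling the number of eigenvalues above $1$ so one may work where the base is $\le1$, then summing only over a block $[\lceil n/2\rceil,n]$ on which the exponents $k^{-\tau_3}$ are all comparable to $(n/2)^{-\tau_3}$ — and most of the care then goes into keeping the directions of the inequalities straight and all constants uniform in $d$; the rest is the routine bookkeeping already present in the proof of \RefThm{thm:ALG_PT}.
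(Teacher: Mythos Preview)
The paper does not give its own proof of this theorem; it merely states the result and cites \cite{KW19} for the argument, so there is nothing in the paper to compare against. Your proposal is correct. The necessity direction is the straightforward adaptation of the algebraic case, and in the sufficiency direction you correctly isolate the genuine new obstacle---the non-monotonicity of $n\mapsto\lambda_{n,d}^{n^{-\tau_3}}$ when the base lies in $(0,1)$---and resolve it by first bounding $N_d$ polynomially in $d$ and then summing over the block $[\lceil n/2\rceil,n]$ where the exponents are comparable; this is exactly the right idea. One trivial boundary fix: to guarantee $\lambda_{k,d}\le1$ for every $k$ in the block you need $\lceil n/2\rceil>N_d$, so the threshold should be $n\ge 2(N_d+1)$ rather than $n\ge 2N_d$; this does not affect the polynomial-in-$d$ bookkeeping.
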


\medskip

Many further theoretical results on exponential tractability, partly in the context of concrete problem settings, can be found in, e.g.,
\cite{DKPW14, DLPW11, IKLP15, IKPW16a, IKPW16b, KPW14, KPW17, LX16a, LX16b, LX17, PP14, PPW17, SW18, S17, X15}. For results regarding exponential tractability in the tensor product Hilbert case setting, we refer to \cite{HKW20}.

Before we move on to generalized tractability in Section \ref{sec:gen_tractability}, we would like 
to return to our example of Korobov spaces in the next section. 

\section{Example: problems on Korobov spaces, Part 3}\label{sec:example_3} 

Let us return to the (weighted) Korobov spaces considered in Sections \ref{sec:example} and \ref{sec:example_2}. However, we will 
re-define the inner product and norm such that the space consists of (real) analytic functions, which fits the subject 
of exponential tractability. 

Indeed, we define the analytic weighted Korobov space $\calH_{\kor,d,\bsa,\bsb}^{\rm smooth}$ as follows. 
Let $\bsa=\{a_j\}_{j \ge 1}$ and
$\bsb=\{b_j\}_{j \ge 1}$ be two sequences of real positive weights
such that
\begin{equation}\label{eq:cond_a_b}
b_\ast:=\inf_j b_j> 0\ \ \ \ \ \mbox{and}\ \ \ \ \
a_\ast:=\inf_j a_j>0.
\end{equation}
We assume, without loss of generality, that 
$$
a_1 \le a_2 \le a_3 \le \cdots,
$$
i.e., $a_\ast=a_1$.
Fix $\omega\in(0,1)$, and write 
\[
r_{d,\bsa,\bsb} (\bsh):=\omega^{-\sum_{j=1}^{d}a_j \abs{h_j}^{b_j}}
\qquad\mbox{for}\qquad \bsh=(h_1,h_2,\dots,h_d)\in\ZZ^d.
\]

We define $\calH_{\kor,d,\bsa,\bsb}^{\rm smooth}$ as the space of all one-periodic functions 
$f$ with absolutely convergent Fourier series, 
and with finite norm $\norm{f}_{\kor,d,\bsa, \bsb}:=\langle f, f\rangle_{\kor,d,\bsa,\bsb}^{1/2}$,
where the inner product is given by
\[
 \langle f, g\rangle_{\kor,d,\bsa,\bsb}:=
 \sum_{\bsh\in\ZZ^d} r_{d,\bsa,\bsb} (\bsh) \widehat{f} (\bsh) \overline{\widehat{g}(\bsh)}.
\]
Let us again study $L_2$-approximation, i.e., $S_{d,\bsa,\bsb}:\calH_{\kor,d,\bsa,\bsb}^{\rm smooth}\to L_2([0,1]^d)$, $S_{d,\bsa,\bsb} (f)=\EMB_{d,\bsa,\bsb} (f)=f$, again with
$\Lambda^{\rm all}$ as the information class. Also for this problem, it is easy to check that the initial error equals one, so the problem is normalized.
Similarly to Section \ref{sec:example_2}, one can show that the eigenvalues of $\EMB_{d,\bsa,\bsb}^* \EMB_{d,\bsa,\bsb}$ are given by
\begin{equation}\label{eq:eigenvalues_analytic}
 \left\{\lambda_{n,d}\colon n\in\NN\right\}=\left\{\left(r_{d,\bsa,\bsb} (\bsh)\right)^{-1}\colon \bsh\in\ZZ^d\right\}
 =\left\{\omega^{\sum_{j=1}^{d}a_j \abs{h_j}^{b_j}}\colon \bsh\in\ZZ^d\right\}.
\end{equation}
As shown in \cite{DKPW14}, the $n$-th minimal error of $L_2$-approximation on $\calH_{\kor,d,\bsa,\bsb}^{\rm smooth}$ always 
converges exponentially. For this reason, it is justified to study the notions of exponential tractability rather than the 
notions of algebraic tractability  in this context. We could then use Theorems \ref{thm:EXP_SPT} and \ref{thm:EXP_PT}, for example, 
and apply them to the eigenvalues $\left(r_{d,\bsa,\bsb} (\bsh)\right)^{-1}$, in order to analyze the presence 
of EXP-SPT and EXP-PT (for further tractability notions like EXP-WT, we can use corresponding theorems in \cite{KW19}). 
If, for instance, we would like to find out whether $L_2$-approximation on $\calH_{\kor,d,\bsa,\bsb}^{\rm smooth}$ satisfies 
EXP-SPT, we would use Theorem \ref{thm:EXP_SPT}. A slight drawback of this theorem, however, is that we need to know everything about the 
order of the eigenvalues, since we need to study summability of $\lambda_{n,d}^{n^{-\tau}}$ in \eqref{eq:cond_EXP_SPT}. 
This is technically rather involved, and was implicitly done in \cite{DKPW14}. We remark that the concise form 
of the condition in Theorem \ref{thm:EXP_SPT} was not yet known when \cite{DKPW14} was written, and the proof idea in \cite{DKPW14} is less 
straightforward than just checking \eqref{eq:cond_EXP_SPT}. However, as we will see below, an alternative condition that is 
easier to check can be obtained using Theorem \ref{thm:T_strong_tractability}, and we will return to this example again later.
For now, let us just state that an equivalent condition to EXP-SPT of $L_2$-approximation on $\calH_{\kor,d,\bsa,\bsb}^{\rm smooth}$ 
is 
\begin{equation}\label{eq:cond_EXP_SPT_Kor}
 \sum_{j=1}^\infty \frac{1}{b_j} <\infty \quad \mbox{and}\quad \alpha^*:=\liminf_{j\to\infty}\frac{\log a_j}{j}>0.
\end{equation}
This means that we have growth conditions on the weight sequences $\bsa$ and $\bsb$ (and through these parameters and the function 
$r_{d,\bsa,\bsb}$ we have a condition on the decay of the Fourier coefficients of the elements in the function space). 

This was, together with results for various other exponential tractability notions, shown in \cite[Theorem 1]{DKPW14}. 
It is also shown there that for $L_2$-approximation all results are the same independently of whether we consider information from 
$\Lambda^{\rm all}$ or $\Lambda^{\rm std}$, which is rather surprising. Moreover, it was shown in \cite{KPW17} that 
almost all conditions remain the same if we consider $L_\infty$-approximation instead of $L_2$-approximation 
for $\Lambda^{\rm all}$ or $\Lambda^{\rm std}$, and also in the case of $L_\infty$-approximation all results 
for $\Lambda^{\rm all}$ and $\Lambda^{\rm std}$ coincide. For results on numerical integration in $\calH_{\kor,d,\bsa,\bsb}^{\rm smooth}$, 
we refer to \cite{KPW14}.

\section{Generalized tractability}\label{sec:gen_tractability}

The previous section on exponential tractability naturally raises the question whether we can define tractability more generally in terms of 
functions of $\varepsilon^{-1}$ and $d$ instead of the special cases of polynomial or exponential functions. Indeed, such an analysis is possible. This was first done by Gnewuch and Wo\'{z}niakowski in a series of papers in which they introduced generalized tractability, see \cite{GW07}--\cite{GW11}, and also \cite[Chapter 8]{NW08}. In these references, the authors provide an in-depth analysis of generalized tractability, mostly for the case of tensor product problems. In the recent paper \cite{EHK23}, generalized tractability was analyzed in the Hilbert space setting (as in Sections \ref{sec:Hilbert} and \ref{sec:exp_tractability}) without the assumption of a tensor product structure.

We will present selected findings on generalized tractability here. Generally speaking, we would like to define tractability in terms of bounds on the information complexity that are represented by a generalized \textit{tractability function} $T$ depending on $\varepsilon^{-1}$ and $d$. In this setting, it is also easily possible to consider the range of $\varepsilon$ not only as $(0,1)$, but more generally as $(0,\infty)$, which gives us more flexibility.

In this section, we again work with the assumption that $\{\calF_d\}_{d\in\NN}$ and $\{\calG_d\}_{d\in\NN}$ are Hilbert spaces, and 
we let $\{S_d : \calF_d \to \calG_d\}_{d \in \NN}$ be a sequence of
compact linear solution operators. Let us assume that there is an infinite number of positive $\lambda_{n,d}$ for every $d\in\NN$. 

We need to assume several properties of the function $T$ (we follow \cite{EHK23} in our notation here, but similar observations were also made before in \cite{GW07}--\cite{GW11}). We fix $s\in\NN$, and define $T$ as a function of three (or, to be more precise, $s+2$) arguments,
\begin{equation} \label{eq:Tspec}
    T :(0,\infty) \times \NN \times [0,\infty)^s \rightarrow (0,\infty).
\end{equation}
The basic idea is that we define our approximation problem to be tractable if $n_{\CRI} (\varepsilon, S_d) \le  C_{\bsp}\, T(\varepsilon^{-1},d,\bsp)$ for some constant $C_{\bsp}$, depending only on the parameter $\bsp$. The parameter $\bsp$ is an $s$-dimensional vector with $s \geq 1$, where we assume that every component of $\bsp$ is nonnegative. We make the following assumptions on $T$. 
\begin{itemize}
 \item $T$ is non-decreasing in all variables, which implies that the problem is expected to become no easier by decreasing $\varepsilon$, or increasing $d$. Furthermore, increasing the components of $\bsp$ allows for a possibly looser bound on the information complexity. 
 \item We also require that 
 \[
	\lim_{\varepsilon \to 0} T(\varepsilon^{-1},d,\bsp) = \infty \qquad \forall d \in \NN, \ \bsp \in [0,\infty)^s,
\]
which makes sense since we assumed that there are an infinite number of positive $\lambda_{n,d}$. 
\item  We require the existence of the following limit,
\[
T(0,d,\bsp):=\lim_{\varepsilon\to\infty}T(\varepsilon^{-1},d,\bsp) = \inf_{\varepsilon\in (0,\infty)} T(\varepsilon^{-1},d,\bsp) \ge T(0,1,\bszero) > 0,
\]
where $\bszero$ denotes the vector consisting only of zeros.
\item We require the existence of a $K_{\bsp,\tau}$ depending on $\bsp$ and $\tau$, but independent of $\varepsilon$ and $d$, such that
\begin{multline*}
	(T(\varepsilon^{-1},d,\bsp))^\tau \le K_{\bsp,\tau} T(\varepsilon^{-1},d,\tau \bsp)\\ \forall \varepsilon \in (0,\infty), \ d \in \NN, \ \bsp\in[0,\infty)^s, \ \tau\in [1,\infty).
\end{multline*}
\end{itemize}

With these assumptions, we can formally define, e.g., \textit{$T$-tractability} and \textit{strong $T$-tractability}. 
Indeed, a problem is called $T$-tractable with parameter $\bsp$ if there exists a positive constant $C_{\bsp}$, 
which is independent of $\varepsilon$ and $d$, such that
\begin{equation} \label{eq:def_T_tractability}
	n_{\CRI} (\varepsilon, S_d) \le C_{\bsp}\, T(\varepsilon^{-1},d,\bsp) \qquad \forall \varepsilon \in (0,\infty), \ \forall d \in \NN.
\end{equation}
A problem is \emph{strongly}
$T$-tractable with parameter $\bsp$ if the information complexity is independent of the dimension of the problem, that is, there exists a positive constant $C_{\bsp}$, again independent of $\varepsilon$ and $d$, such that
\begin{equation} \label{eq:def_T_strong_tractability}
	n_{\CRI} (\varepsilon, S_d) \le C_{\bsp}\, T(\varepsilon^{-1},1,\bsp) \qquad \forall \varepsilon\in (0,\infty) , \ \forall d \in \NN.
\end{equation}
Also for $T$-tractability, one can study the exponents of tractability, which 
is more technical than for tractability in the algebraic or exponential cases. Moreover, also in this case one has a potential trade-off of exponents. We refer to \cite{EHK23} for details and results on exponents.

As mentioned above, we can consider special choices of the function $T$ and obtain examples that we have 
seen in the previous sections. If we would like to consider ALG-PT as in \eqref{eq:def_PT}, we 
would choose $\bsp=(q,p)$ and $T(\varepsilon^{-1},d,\bsp)=d^q\varepsilon^{-p}$ for $\varepsilon\in (0,1)$. Alternatively, we
can allow the wider range $\varepsilon\in (0,\infty)$, and would then choose 
\[
 T(\varepsilon^{-1},d,\bsp)=d^q\max\{1,\varepsilon^{-1}\}^p \qquad \forall \varepsilon \in (0,\infty) , \ \forall d \in \NN,
\]
where the maximum is used to cover the cases where $\varepsilon\ge 1$. If we would like to consider EXP-PT for $\varepsilon\in (0,\infty)$, we would again 
choose $\bsp=(q,p)$ and
\[
 T(\varepsilon^{-1},d,\bsp)=\left(1+\log(\max\{1,\varepsilon^{-1}\})\right)^p\, d^q \qquad \forall \varepsilon \in (0,\infty) , \ \forall d \in \NN.
\]

Let us state an exemplary result on $T$-tractability from \cite{EHK23}. 
\begin{theorem}[Emenike, Kritzer, Hickernell]\label{thm:T_strong_tractability}
Let $\{\calF_d\}_{d\in\NN}$ and $\{\calG_d\}_{d\in\NN}$ be Hilbert spaces, and let 
$\{S_d\colon\calF_d\to \calG_d\}_{d\in\NN}$ be compact linear operators. Consider information from $\Lambda^{\rm all}$ and 
the absolute worst case setting on the unit balls $\calB_d$ of the $\calF_d$. Furthermore, 
let $T$ be a tractability function fulfilling all assumptions as above.  

Then we have $T$-tractability if and only if there exists a 
$\bsp \in [0,\infty)^s$ and an integer $H > 0$ such that
\[
     \sup_{d \in \NN} \sum_{n =\lceil H\cdot T(0,d,\bsp)\rceil}^\infty \frac{1}{T(\lambda_{n,d}^{-1/2},1,\bsp)} < \infty.
\]
\end{theorem}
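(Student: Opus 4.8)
The plan is to follow the same two‑sided strategy as in the proofs of Theorems~\ref{thm:ALG_SPT} and~\ref{thm:ALG_PT}, replacing the explicit powers of $\varepsilon^{-1}$ and $d$ by the abstract tractability function $T$ and exploiting its structural properties. The only facts about the problem I would use are the identity $n_{\ABS}(\varepsilon,d)=\#\{n\in\NN\colon\lambda_{n,d}>\varepsilon^{2}\}$ coming from \eqref{eq:infcomp_Hilbert} (together with $\lambda_{n,d}\to0$, which makes the series in the statement a meaningful convergence condition, since $T(\lambda_{n,d}^{-1/2},1,\bsp)\to\infty$), the monotonicity of $T$ in each of its arguments, the limit $T(0,d,\bsp)=\inf_{\varepsilon}T(\varepsilon^{-1},d,\bsp)\ge T(0,1,\bszero)>0$, and the power‑type inequality $(T(\varepsilon^{-1},d,\bsp))^{\tau}\le K_{\bsp,\tau}\,T(\varepsilon^{-1},d,\tau\bsp)$ (also in its $\varepsilon\to\infty$ form $(T(0,d,\bsp))^{\tau}\le K_{\bsp,\tau}\,T(0,d,\tau\bsp)$, obtained by passing to the limit).

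\emph{Sufficiency.} Suppose the displayed supremum equals some finite $M$ for parameters $\bsp$ and $H$, and set $m_{d}:=\lceil H\,T(0,d,\bsp)\rceil$. Since $\lambda_{n,d}$ is non‑increasing and $T(\cdot,1,\bsp)$ is non‑decreasing, $n\mapsto 1/T(\lambda_{n,d}^{-1/2},1,\bsp)$ is non‑increasing, so for every $n\ge m_{d}$,
\[
\frac{n-m_{d}+1}{T(\lambda_{n,d}^{-1/2},1,\bsp)}\le\sum_{k=m_{d}}^{n}\frac{1}{T(\lambda_{k,d}^{-1/2},1,\bsp)}\le M,
\]
hence $T(\lambda_{n,d}^{-1/2},1,\bsp)\ge(n-m_{d}+1)/M$. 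Given $\varepsilon\in(0,\infty)$, any $n\ge m_{d}-1$ with $n>m_{d}-2+M\,T(\varepsilon^{-1},1,\bsp)$ must satisfy $\lambda_{n+1,d}\le\varepsilon^{2}$: otherwise $\lambda_{n+1,d}^{-1/2}<\varepsilon^{-1}$, and monotonicity of $T$ would give $T(\lambda_{n+1,d}^{-1/2},1,\bsp)\le T(\varepsilon^{-1},1,\bsp)$, contradicting the bound just derived. Thus $n_{\ABS}(\varepsilon,d)\le m_{d}+M\,T(\varepsilon^{-1},1,\bsp)+1$, and bounding $m_{d}\le H\,T(0,d,\bsp)+1\le H\,T(\varepsilon^{-1},d,\bsp)+1$, $T(\varepsilon^{-1},1,\bsp)\le T(\varepsilon^{-1},d,\bsp)$, and absorbing the additive constants via $T(\varepsilon^{-1},d,\bsp)\ge T(0,1,\bszero)$, one gets $n_{\ABS}(\varepsilon,d)\le C_{\bsp}\,T(\varepsilon^{-1},d,\bsp)$ with $C_{\bsp}=H+M+2/T(0,1,\bszero)$, i.e.\ $T$‑tractability with the same parameter $\bsp$.

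\emph{Necessity.} Conversely, suppose $n_{\ABS}(\varepsilon,d)\le C\,T(\varepsilon^{-1},d,\bsp_{0})$ for all $\varepsilon\in(0,\infty)$, $d\in\NN$. Whenever $\varepsilon<\sqrt{\lambda_{n,d}}$ we have $n\le\#\{m\colon\lambda_{m,d}>\varepsilon^{2}\}=n_{\ABS}(\varepsilon,d)\le C\,T(\varepsilon^{-1},d,\bsp_{0})$, so every $x>\lambda_{n,d}^{-1/2}$ lies in $\{x>0\colon T(x,d,\bsp_{0})\ge n/C\}$, whose infimum $\psi_{d}(n/C)$ (finite because $T(\varepsilon^{-1},d,\bsp_{0})\to\infty$) therefore satisfies $\psi_{d}(n/C)\le\lambda_{n,d}^{-1/2}$. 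Consequently $T(\lambda_{n,d}^{-1/2},1,\bsp)\ge T(\psi_{d}(n/C),1,\bsp)$ for any admissible $\bsp$. The idea is then to take $\bsp:=\tau\bsp_{0}$ with $\tau$ large (at least $2$ in the algebraic model) and $H$ large, so that the series begins at $\lceil H\,T(0,d,\tau\bsp_{0})\rceil$; since $T(0,d,\tau\bsp_{0})$ is, up to $K_{\bsp_{0},\tau}$, at least $(T(0,d,\bsp_{0}))^{\tau}$, while each surviving term $1/T(\psi_{d}(n/C),1,\tau\bsp_{0})$ decays like the $\tau$‑th power of $1/T(\psi_{d}(n/C),1,\bsp_{0})$ by the power‑type inequality, a direct tail estimate shows the sum is $O(1)$ uniformly in $d$, provided one controls $T$ at dimension $d$ in terms of $T$ at dimension $1$ and the factor $T(0,d,\bsp_{0})$. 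In the model case $T(\varepsilon^{-1},d,\bsp)=d^{q}\varepsilon^{-p}$ this is exactly $\psi_{d}(y)=(y/d^{q})^{1/p}$ and $\sum_{n\ge cd^{\tau q}}(Cd^{q}/n)^{\tau}=O(d^{q\tau(2-\tau)})=O(1)$ for $\tau\ge2$.

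\emph{Main obstacle.} The sufficiency half is essentially routine once the monotone‑rearrangement estimate is set up. The delicate point is the necessity half, and specifically the mismatch between the dimension $d$ appearing in the hypothesis $n_{\ABS}(\varepsilon,d)\le C\,T(\varepsilon^{-1},d,\bsp_{0})$ and the evaluations at dimension $1$ in the characterising series. Closing this gap is the crux: it forces one both to pass to a strictly larger parameter $\bsp=\tau\bsp_{0}$ (so that the power‑type inequality applies) and to relate $T(x,d,\bsp_{0})$ quantitatively to $T(x,1,\bsp_{0})$ and to $T(0,d,\bsp_{0})$, so that the extra $d$‑growth of the summation offset $\lceil H\,T(0,d,\bsp)\rceil$ absorbs the dimension weight inherited from the tractability bound; arranging $\tau$, $H$, $C$ and $K_{\bsp_{0},\tau}$ so that everything closes \emph{uniformly in $d$} is where essentially all the effort lies. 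This also explains the asymmetry in the statement — the parameter witnessing the series condition need not coincide with the one witnessing tractability, consistent with the ``$\exists\,\bsp$'' in the formulation.
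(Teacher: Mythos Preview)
The paper does not prove this theorem; it merely states the result and cites \cite{EHK23} for the proof, so there is no in-paper argument to compare your proposal against.

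Your sufficiency argument is correct and follows exactly the template of the proof sketch of Theorem~\ref{thm:ALG_SPT}: the monotone-rearrangement inequality gives $T(\lambda_{n,d}^{-1/2},1,\bsp)\ge (n-m_d+1)/M$, from which the bound on $n_{\ABS}(\varepsilon,d)$ follows by contraposition and by absorbing the additive constants using $T(\varepsilon^{-1},d,\bsp)\ge T(0,1,\bszero)>0$. Nothing is missing there.

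The necessity half, however, is not just ``where the effort lies'' --- with only the four assumptions on $T$ listed in this survey it is actually \emph{false}, so your sketch cannot be completed without further input. Concretely, take $s=1$ and $T(\varepsilon^{-1},d,p)=\max\{1,\varepsilon^{-1}\}^{p\,d}$. This $T$ is non-decreasing in all arguments, satisfies $T(0,d,p)=1\ge T(0,1,0)=1>0$, tends to $\infty$ as $\varepsilon\to0$, and obeys the power-type inequality with $K_{\bsp,\tau}=1$. Now let $\lambda_{n,d}=n^{-2/d}$; then $n_{\ABS}(\varepsilon,d)\le\varepsilon^{-d}=T(\varepsilon^{-1},d,1)$, so $T$-tractability holds. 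But the series in the theorem reads $\sum_{n\ge H}\lambda_{n,d}^{p/2}=\sum_{n\ge H}n^{-p/d}$, which diverges as soon as $d\ge p$, for every fixed $p$; hence no choice of $\bsp$ and $H$ makes the supremum finite. This shows that the step you flag --- relating $T(x,d,\bsp_0)$ to $T(x,1,\bsp_0)$ and $T(0,d,\bsp_0)$ --- genuinely requires an additional structural hypothesis on $T$ (some form of factorisation or submultiplicativity in the $(\varepsilon^{-1},d)$ variables) that the survey has not listed but that \cite{EHK23} must impose. Your identification of the obstacle is accurate; what is missing is the realisation that it cannot be overcome from the stated axioms alone, and hence that the proof must invoke a further property of $T$ from the original source.
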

Note that, if we choose $T(\varepsilon^{-1},d,\bsp)=\varepsilon^{-p}\, d^q$ for $\varepsilon\in (0,1)$ and $d\in\NN$, 
we recover Theorem \ref{thm:ALG_PT} from Theorem \ref{thm:T_strong_tractability}. Note also that, 
if we choose  $T(\varepsilon^{-1},d,\bsp)=\left(1+\log(\max\{1,\varepsilon^{-1}\})\right)^p\, d^q$, 
we do not precisely recover Theorem \ref{thm:EXP_PT}, but obtain a slightly modified condition, which is actually easier 
to check than that in Theorem \ref{thm:EXP_PT}. It is shown in \cite{EHK23} that these conditions are equivalent.

We would like to highlight another choice of the function $T$, which was first analyzed in \cite{GW11},
and gives rise to the concept of (algebraic) \textit{quasi-polynomial tractability (ALG-QPT)}. The idea of QPT was introduced by the authors of \cite{GW11} 
as an example of the choice of the tractability function $T$ that guarantees 
slightly more than polynomial growth for increasing $\varepsilon^{-1}$ or $d$, but still at a manageable rate, which explains the name ``quasi-polynomial'' (we refer to \cite[p.~313]{GW11} for a more detailed discussion). 
Indeed, following \cite{GW11}, for ALG-QPT, we choose $\bsp$ as a nonnegative scalar $p$, and 
\begin{multline*}
 T(\varepsilon^{-1},d,\bsp)= \exp\left(p (1+\log d) (1+ \log(\max\{1,\varepsilon^{-1}\}))\right)\\ 
\forall \varepsilon \in (0,\infty) , \ \forall d \in \NN.
\end{multline*}
Since its introduction in \cite{GW11}, ALG-QPT, but also its ``exponential'' 
counterpart EXP-QPT, have been studied in a large number of research 
papers, and also in \cite{NW12}. 

To conclude this section, we mention yet another way in which the definition 
of tractability can be generalized: also this generalization was laid out 
in the papers \cite{GW07}--\cite{GW11}, and studied there in detail, in 
particular for linear tensor product problems. Indeed, one may ask under which circumstances tractability holds when one does not allow 
all of the range $(0,\infty)\times \NN$ for $(\varepsilon^{-1},d)$, but a 
somewhat restricted domain, as for example 
\[
 \{(\varepsilon^{-1},d) : \varepsilon \in (V(d),\infty), \ d \in \NN \}, 
\]
where $V:\NN\to [0,\infty)$ is a suitably chosen function. Such a scenario 
requires a more careful analysis of the interplay between the restricted domain and $T$, and of the decay of 
the eigenvalues $\lambda_{n,d}$ of the problem. We refer to 
\cite{GW07}--\cite{GW11} and also \cite{NW08} and \cite{EHK23} for results.

\section{Example: problems on Korobov spaces, Part 4}\label{sec:example_4} 

Let us, finally, return once more to the problem of $L_2$-approximation of functions in 
the analytic Korobov space $\calH_{\kor,d,\bsa,\bsb}^{\rm smooth}$. The eigenvalues of 
the problem are given in \eqref{eq:eigenvalues_analytic}. We will  show how we can 
use Theorem \ref{thm:T_strong_tractability} with the special choice 
$T(\varepsilon^{-1},d,\bsp)=\left(1+\log(\max\{1,\varepsilon^{-1}\})\right)^p\, d^q$ and $\bsp=(p,q)$ to 
obtain a sufficient condition for EXP-SPT. 
Indeed, consider the expression 
\[
 \sum_{n=1}^\infty \frac{1}{T\left(\lambda_{n,d}^{-1/2},1,(p,0)\right)}
\]
for a real $p>0$. 
Inserting our concrete choice of $T$ from above, we obtain
\begin{eqnarray*}
 \sum_{n=1}^\infty \frac{1}{T\left(\lambda_{n,d}^{-1/2},1,(p,0)\right)}
 &=& 
 \sum_{\bsh\in\ZZ^d} \frac{1}{\left(1+ \log \left(\omega^{-\frac{1}{2}\sum_{j=1}^d a_j \abs{h_j}^{b_j}}\right)\right)^p}\\
  &=& 
 1+\sum_{\bsh\in\ZZ^d\setminus \{\bszero\}} \frac{1}{\left(1+ \log \left(\omega^{-\frac{1}{2}\sum_{j=1}^d a_j \abs{h_j}^{b_j}}\right)\right)^p}\\ 
  &\le& 
 1 + \sum_{\bsh\in\ZZ^d\setminus \{\bszero\}} \frac{1}{\left(\log \left(\omega^{-\frac{1}{2}\sum_{j=1}^d a_j \abs{h_j}^{b_j}}\right)\right)^p}\\
 &=&
 1+\frac{2^p}{(\log (\omega^{-1}))^p} \sum_{\bsh\in\ZZ^d\setminus \{\bszero\}} 
 \frac{1}{\left(\sum_{j=1}^d a_j \abs{h_j}^{b_j}\right)^p}.
\end{eqnarray*}
Suppose that \eqref{eq:cond_EXP_SPT_Kor} holds, and let us study the sum 
\[
 \Sigma:=\sum_{\bsh\in\ZZ^d \setminus \{\bszero\}} 
 \frac{1}{\left(\sum_{j=1}^d a_j \abs{h_j}^{b_j}\right)^p}.
\]
Note that, for any $\bsh\in \ZZ^d \setminus \{\bszero\}$, we have 
$\sum_{j=1}^d a_j \abs{h_j}^{b_j} \ge a_1\ge \lfloor a_1 \rfloor \ge 0$, 
and that in any case we have 
$\sum_{j=1}^d a_j \abs{h_j}^{b_j} \ge a_1>0$ as $a_1=a_*$ is assumed to be 
strictly positive (see~\eqref{eq:cond_a_b}).

Hence we obtain
\begin{eqnarray*}
 \Sigma &=& \sum_{\substack{\bsh\in\ZZ^d \setminus \{\bszero\} \\
 a_1\, \le\, \sum_{j=1}^d a_j \abs{h_j}^{b_j} \,<\, \lfloor a_1 \rfloor +1\\ }}
 \frac{1}{\left(\sum_{j=1}^d a_j \abs{h_j}^{b_j}\right)^p}\\
 &&+\sum_{\ell=\lfloor a_1 \rfloor +1}^\infty \sum_{\substack{\bsh\in\ZZ^d \setminus \{\bszero\} \\
 \ell\, \le\, \sum_{j=1}^d a_j \abs{h_j}^{b_j} \,<\, \ell +1\\ }}
 \frac{1}{\left(\sum_{j=1}^d a_j \abs{h_j}^{b_j}\right)^p}\\
 &\le& 
 \sum_{\substack{\bsh\in\ZZ^d \setminus \{\bszero\} \\
 a_1\, \le\, \sum_{j=1}^d a_j \abs{h_j}^{b_j} \,<\, \lfloor a_1 \rfloor +1\\ }}
 \frac{1}{a_1^p}\ 
 +\sum_{\ell=\lfloor a_1 \rfloor + 1}^\infty \sum_{\substack{\bsh\in\ZZ^d \setminus \{\bszero\} \\
 \ell\, \le\, \sum_{j=1}^d a_j \abs{h_j}^{b_j} \,<\, \ell +1\\ }}
 \frac{1}{\ell^p}\\
 &=& \frac{1}{a_1^p}\, \abs{\left\{\bsh\in\ZZ^d \setminus \{\bszero\} \colon
a_1\, \le\, \sum_{j=1}^d a_j \abs{h_j}^{b_j} \,<\, \lfloor a_1 \rfloor +1\right\}}\\
 &&+\sum_{\ell=\lfloor a_1 \rfloor + 1}^\infty \frac{1}{\ell^p}
  \abs{\left\{\bsh\in\ZZ^d \setminus \{\bszero\} \colon
\ell\, \le\, \sum_{j=1}^d a_j \abs{h_j}^{b_j} \,<\, \ell +1\right\}}\\
 &\le& \sum_{\ell=\lfloor a_1 \rfloor}^\infty \frac{1}{\max\{a_1, \ell\}^p}
  \abs{\left\{\bsh\in\ZZ^d \colon
\sum_{j=1}^d a_j \abs{h_j}^{b_j} \,<\, \ell +1\right\}}.
\end{eqnarray*}
By the condition on the sequence $\bsa$ in \eqref{eq:cond_EXP_SPT_Kor}, we know that 
for any $\delta\in (0,\alpha^*)$ there exists a $j_\delta^*$ such that $a_j \ge e^{\delta j}$ 
for all $j\ge j_\delta^*$. Since 
$\ell +1 > a_1$ if $\ell\ge \lfloor a_1 \rfloor$, we can now 
make use of an estimate in the proof of Theorem 1 in \cite{DKPW14}. This estimate states that
\[
 \abs{\left\{\bsh\in\ZZ^d \colon
\sum_{j=1}^d a_j \abs{h_j}^{b_j} \,<\, \ell +1\right\}}
\le 3^{j_\delta^*} \,a_1^{-B}\, (\ell+1)^{B+ (\log 3)/\delta}.
\]
Using the latter estimate, we obtain
\[
 \Sigma \le 3^{j_\delta^*} \,a_1^{-B}\, \sum_{\ell=\lfloor a_1 \rfloor}^\infty \frac{(\ell+1)^{B+ (\log 3)/\delta}}{\max\{a_1,\ell\}^p}, 
\]
which is finite and independent of $d$ if we choose $p> B+ (\log 3)/\delta +1$. Thus we obtain 
EXP-SPT by applying Theorem \ref{thm:T_strong_tractability}, and we can choose $p$ arbitrarily close to
$B+ (\log 3)/\alpha^* +1$ with this method. 

We remark that the slightly more complicated original approach to showing this result in \cite{DKPW14} 
implies a better bound on the exponent $p^*$ of EXP-SPT, namely 
\[
 \max\left\{B, \frac{\log 3}{\alpha^*}\le p^* \le B + \frac{\log 3}{\alpha^*}\right\}.
\]
Nevertheless, our example shows that Theorem \ref{thm:T_strong_tractability} is useful in 
determining that EXP-SPT holds for our approximation problem.

\section{Conclusion}

In this paper, we have summarized several classical and newer results on tractability analysis. 
As the title of the paper suggests, this article does not claim to give by any means a full account 
of the field (which would be virtually impossible; recall that there exists the three-volume book \cite{NW08}--\cite{NW12} on this subject). 
Nevertheless, we hope to have provided some idea of how the theory works for a particular type of problems, with 
a special focus on the more recent developments regarding exponential and generalized tractability.

\section*{Acknowledgements}

The author would like to thank David Krieg and Friedrich Pillichshammer for valuable discussions, and 
three anonymous referees for helpful comments. 
The author is supported by the Austrian Science Fund (FWF), Project P34808. 
For the purpose of open access, the author has applied a CC BY public copyright licence to any author accepted manuscript version arising from this submission.
The paper was initiated during the Dagstuhl Seminar 23351 `Algorithms and Complexity for Continuous Problems', in Schloss Dagstuhl, Wadern, Germany, in August 2023.
We are grateful to the Leibniz Center Schloss Dagstuhl.

\begin{small}
	\noindent\textbf{Author's address:}\\

	\noindent Peter Kritzer\\
	Johann Radon Institute for Computational and Applied Mathematics (RICAM)\\
	Austrian Academy of Sciences\\
	Altenbergerstr. 69, 4040 Linz, Austria.\\
	\texttt{peter.kritzer@oeaw.ac.at}

\end{small}

\end{document}